\patchcmd{\appendices}{\quad}{: }{}{}
\newcommand{\ra}[1]{\renewcommand{\arraystretch}{#1}}
\newtheorem{thm}{Theorem}
\newtheorem{lem}{Lemma}[section]
\newtheorem{lemma}[lem]{Lemma}
\newtheorem{propos}[lem]{Proposition}
\newtheorem{observation}[lem]{Observation}
\newtheorem{claim}[lem]{Claim}
\newtheorem{conjecture}{Conjecture}
\newcommand{\colordependent}[2]{#2} 
\newcommand{\putat}[3]{\begin{picture}(0,0)(0,0)\put(#1,#2){#3}\end{picture}}
\newcommand{\supp}{supp}
\newcommand{\F}{\mathcal{F}}
\newcommand{\D}{\mathcal{D}}
\newcommand{\N}{\mathbb{N}}
\newcommand{\Z}{\mathbb{Z}}
\newcommand{\R}{\mathbb{R}}
\newcommand{\E}{\mathbb{E}}
\newcommand{\Prob}{\mathbb{P}}
\renewcommand{\Pr}{\mathbb{P}}
\DeclareMathOperator{\even}{even}
\DeclareMathOperator{\odd}{odd}
\DeclareMathOperator{\sgn}{sgn}
\DeclareMathOperator{\U}{Unif}
\DeclareMathOperator{\Disc}{Dis}
\newcommand{\ind}{\mathbbm{1}}
\newcommand{\cH}{{\cal H}}
\newcommand{\cR}{{\cal R}}
\newcommand{\cO}{{\cal O}}
\title{The power of online thinning in reducing discrepancy}
\author{
Raaz Dwivedi\thanks{University of California, Berkeley, email:\texttt{raaz.rsk@berkeley.edu}.}
\and Ohad N. Feldheim\thanks{Hebrew University of Jerusalem 1613091, email:\texttt{Ohad.Feldheim@mail.huji.ac.il}, research conducted in Stanford University and supported in part by NSF grant DMS 1613091.}
\and Ori Gurel-Gurevich\thanks{Hebrew University of Jerusalem, email:\texttt{Ori.Gurel-Gurevich@mail.huji.ac.il}, research was supported by the Israel Science Foundation (grant No. 1707/16).}
\and Aaditya Ramdas\thanks{University of California, Berkeley, email:\texttt{aramdas@berkeley.edu}.}
}
\begin{document}

\maketitle
\begin{abstract}
Consider an infinite sequence of independent, uniformly chosen points from
$[0,1]^d$. After looking at each point in the sequence, an overseer is allowed to either keep it or reject it, and this choice may depend on the
locations of all previously kept points. However, the overseer must keep at least one of every two consecutive points.
We call a sequence generated in this fashion a \emph{two-thinning} sequence.
Here, the purpose of the overseer is to control the discrepancy of the empirical distribution of points,
that is, after selecting $n$ points, to reduce the maximal deviation of the number of points inside any
axis-parallel hyper-rectangle of volume $A$ from $nA$.
Our main result is an explicit low complexity two-thinning strategy which guarantees discrepancy of $O(\log^{2d+1} n)$ for all $n$ with high probability
(compare with $\Theta(\sqrt{n\log\log n})$ without thinning). The case $d=1$ of this result answers a question of Benjamini.

We also extend the construction to achieve the same
asymptotic bound for ($1+\beta$)-thinning, a set-up in which rejecting is only allowed with probability $\beta$ independently for each point.
In addition, we suggest an improved and simplified strategy which we conjecture to guarantee discrepancy of $O(\log^{d+1} n)$ (compare with $\theta(\log^d n)$, the best known construction of a low discrepancy sequence). Finally, we provide theoretical and empirical evidence for our conjecture, and provide simulations supporting the viability of our construction for applications.
\end{abstract}
\smallskip
\noindent \textbf{Keywords.} Two-choices, thinning, discrepancy, subsampling, online, Haar.

\section{Introduction}
Let $(\Omega,\mathcal F, \mu)$ be a probability space and let $\cR$ be a class of subsets of $\Omega$.
The $\cR$-\emph{discrepancy} of $S$, a subset of $\Omega$ of size $n$, with respect to $\mu$ is defined as
\[
\Disc_\cR (S) := \sup_{R \in \cR} \Big| \left|S\cap R\right| - n \mu(R) \Big|.
\]
Let $X =\{X_n\}_{n\in \N}$ be a sequence of elements in $\Omega$, and write $X^n=\{X_i\}_{i\in[n]}$. The discrepancy of $X$ is defined as the sequence of discrepancies $\{\Disc_{\cR}(X^n)\}_{n\in\N}$.

Throughout we consider only with $\cR$-discrepancy with respect to Lebesgue measure on $\Omega=[0,1]^d$, where $\cR=\{\bigotimes_{i=1}^d [a_i, b_i) \subseteq [0,1)^d : 0\le a_i<b_i\le 1\}$ are the axis aligned hyper-rectangles. For brevity we call this simply \emph{discrepancy}, and denote $\Disc(X^n)=\Disc_\cR (X^n)$.

The best known upper bound for the discrepancy of $X$ is $\Disc(X^n) = O(\log^d n)$ and several lattice related constructions are known (see, e.g. \cite{Bilyk-chapter}).
However, in many applications only restricted control over the locations of the points $X_n$ is available so that an optimal discrepancy sequence cannot be used. The most extreme case is the \emph{Monte-Carlo} setting, where the points  are independent samples of the uniform distribution over $[0,1)^d$. In this case classical results in probability theory imply that $\Disc(X^n)=O(\sqrt {n\log\log n})$ and that this estimate is tight. Due to the significant gap between the optimal discrepancy obtainable by an infinite sequence and the discrepancy of a sequence of independent samples it has been desirable to look for variations on the Monte-Carlo setting which obtain lower discrepancy by allowing an overseer mild control over the sequence $X$. The most well known result in this line of investigation is the ``power of two-choices'' paper, by Azar-Broder-Karlin-Upfal \cite{ABKU}, who show that in the setting of $\Omega=[N]$, uniform $\mu$ and $\cR=\{\{n\}\ :\ n\in [N]\}$,
by allowing the overseer to choose $X_n$ among two i.i.d. $\mu$-distributed samples it is possible to obtain an exponential improvement in the discrepancy.

In this work we investigate a related, weaker sense of control. Consider an infinite sequence $U_1^{\infty} := \{U_n\}_{n\in\N}$ of i.i.d. uniform random variables on $[0,1)^d$. These points are shown to an overseer one by one, who may depend on his past choices in deciding whether to keep each point or reject it. However his control is restricted by the constraint of keeping at least one of every two consecutive points. We call a strategy executed by the overseer in producing such a sequence a \emph{two-thinning strategy}. We also consider an even weaker setting, in which, in addition to the restriction of a two-thinning, each point has independent probability $\beta$ to be rejectable and otherwise it must be kept. Inspired by the work of
Peres-Talwar-Wieder \cite{PTW} on $(1+\beta)$-choice, we call this setting \emph{$(1+\beta)$-thinning}. More precise definitions of the above terminology are provided in Section~\ref{sec: prelim}.

Our main result is an explicit $(1+\beta)$-thinning strategy on $\U [0,1)^d$, which we call \emph{Haar} strategy which satisfies the following.
\begin{thm}\label{thm:main}
The Haar $(1+\beta)$-thinning strategy yields a sequence $Z$ which almost surely satisfies
$$
\limsup_{n\to\infty} \frac{\Disc(Z^n)}{\log^{2d+1}(n)} \le \frac{100 (d^2+1)}{\beta} \ .
$$

\end{thm}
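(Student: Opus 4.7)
The plan is to reduce the control of $\Disc(Z^n)$ to the control of empirical coefficients with respect to the tensorised Haar basis on $[0,1)^d$ and to design the Haar strategy so that each such coefficient stays polylogarithmically small. For a dyadic interval $I\subseteq[0,1)$ let $\psi_I\colon[0,1)\to\{-1,0,+1\}$ denote the associated $\pm 1$ Haar function, and for a tuple $\mathbf{I}=(I_1,\dots,I_d)$ of dyadic intervals write $\psi_{\mathbf I}(x)=\prod_{j=1}^d \psi_{I_j}(x_j)$. Set $S_n(\psi):=\sum_{i\le n}\psi(Z_i)$. The first step is a deterministic decomposition: every axis-aligned box $R\in\cR$ decomposes by inclusion--exclusion into $2^d$ anchored boxes; each anchored box is approximated up to an $L^\infty$ error of $O(d\,2^{-L}n)$ in the count by a union of at most $O(\log^d n)$ dyadic tensor boxes of level $\le L$; and the indicator of each such dyadic box expands into at most $O(\log^d n)$ tensor Haar functions at levels $\le L$. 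Choosing $L=\lceil\log_2 n\rceil$ yields
$$
\Disc(Z^n)\;\le\; C_d\, \log^{2d}\! n\,\cdot\, \max_{\mathbf I:\,|\mathbf I|\le L}|S_n(\psi_{\mathbf I})|\;+\;O(1),
$$
so the task reduces to the uniform estimate $\max_{\psi\in \cH_L}|S_n(\psi)|=O(\log n/\beta)$, where $\cH_L$ is the set of tensor Haar functions of level $\le L$ in each coordinate.

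The Haar strategy then works as follows. Call a step $n$ \emph{rejectable} if the overseer is allowed to reject $U_n$, which in the $(1+\beta)$-thinning model occurs independently with probability $\beta$. On each rejectable step, the overseer identifies a single Haar function $\psi^\star=\psi^\star(n)\in\cH_L$ that ranks highest in a priority order combining its current imbalance $|S_n(\psi)|$, its level, and a round-robin tie-breaker, and then keeps whichever of $U_{n-1}$ and $U_n$ makes $|S_n(\psi^\star)|$ smaller. For a fixed $\psi$, on steps where $\psi=\psi^\star$ and rejection is available the conditional expectation of $|S_{n+1}(\psi)|-|S_n(\psi)|$ is strictly negative, of order $-\beta$, while on all other steps $S_n(\psi)$ is a bounded-increment martingale with increments in $\{-1,0,+1\}$. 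A hitting-time / supermartingale argument of the sort familiar from the ``power of two choices'' literature then yields $|S_n(\psi)|=O(\log n/\beta)$ with polynomial tail probability, provided $\psi$ becomes $\psi^\star$ often enough whenever it is large.

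The main obstacle, and the crux of the proof, is precisely this simultaneity issue: a single rejection opportunity can service only one Haar function, yet $|\cH_L|$ is polynomial in $n$ and every coefficient must remain balanced at once. I would resolve this through a potential function
$$
\Phi_n \;=\; \sum_{\psi\in\cH_L} \bigl(e^{\lambda\,|S_n(\psi)|}-1\bigr)
$$
with $\lambda=c\beta/\log n$, and show that the priority rule forces $\E[\Phi_{n+1}\mid\mathcal F_n]\le (1-c\beta)\Phi_n$ whenever $\Phi_n$ exceeds a polylogarithmic threshold; the negative drift comes from the summand indexed by $\psi^\star$, which dominates the sum precisely when some coefficient is large and is exactly the one being reduced by the balancing step. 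A Doob-type bound then controls $\Phi_n$ with high probability, from which $\max_\psi |S_n(\psi)|=O(\log n/\beta)$ follows. Combining this with the approximation estimate of the first paragraph produces $\Disc(Z^n)=O(\log^{2d+1}n/\beta)$. Finally, the almost-sure $\limsup$ bound in the theorem follows by an elementary Borel--Cantelli argument along the geometric subsequence $n=2^k$, using the fact that $\Disc(Z^n)$ changes by at most $1$ per step; tracking constants carefully through the potential argument is where the explicit factor $100(d^2+1)$ in the theorem should eventually emerge.
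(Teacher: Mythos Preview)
Your proposal describes and analyzes a strategy different from the one the theorem is about. In the paper the ``Haar $(1+\beta)$-thinning strategy'' is a specific, named construction: the conditional density of $Z_n$ given the past is
\[
\lambda_n(x)=1+\frac{\beta}{2W(h)}\sum_{H\in\cH_1^h}\sgn\langle\nu_n,-H\rangle\,H(x),\qquad h=\lfloor\log n\rfloor,
\]
i.e.\ every relevant Haar function contributes a small tilt \emph{simultaneously}. There is no priority rule, no single $\psi^\star$ serviced per step. Because the tilt is additive and the Haar functions are orthogonal, each fixed $H$ sees a drift of exactly $\beta/W(h)$ toward zero regardless of what the other coefficients are doing; this is the whole point of the construction. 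The paper then applies a one-parameter supermartingale bound (Lemma~\ref{lem: balancing}) to each $H$ separately, obtaining $\E\exp\bigl(\tfrac{\beta}{2W(h)}|\langle\nu_n,H\rangle|\bigr)\le 600W(h)^2/\beta^2$, and combines these via the dyadic decomposition and a union bound to get Theorem~\ref{thm:main1}. Theorem~\ref{thm:main} is then literally Borel--Cantelli applied to Theorem~\ref{thm:main1}. No global potential over all Haar functions is needed precisely because the simultaneity issue you worry about has been dissolved by the definition of the strategy.

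Your potential argument, even for your priority strategy, has a gap. At each step roughly $W(h)\asymp\log^d n$ Haar coefficients move (those whose support contains the new point), each contributing positive drift $\sim\lambda^2 e^{\lambda|S_n(\psi)|}$ to $\Phi_n$, while only the single $\psi^\star$ contributes negative drift $\sim\lambda\beta\, e^{\lambda|S_n(\psi^\star)|}$. For the contraction $\E[\Phi_{n+1}\mid\F_n]\le(1-c\beta)\Phi_n$ you would need $\lambda\lesssim\beta/\log^d n$, not $\lambda\asymp\beta/\log n$; with the smaller $\lambda$ the resulting bound on $\max_\psi|S_n(\psi)|$ is $O(\log^{d}n\cdot\log n/\beta)$, which matches the paper's $W(h)\cdot\log n/\beta$ and shows that the extra $\log^d n$ you placed in the decomposition constant is double-counted. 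More seriously, your priority rule mixes ``level'' and ``round-robin'' with the size of $|S_n(\psi)|$, so $\psi^\star$ need not index the largest summand of $\Phi_n$, and the domination claim requires an argument you have not supplied.
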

This result is obtained as an immediate consequence of the following more detailed theorem.
\begin{thm}\label{thm:main1}
The Haar $(1+\beta)$-thinning strategy yields a sequence $Z$ which
for all $n\in \N$ and $\Delta>0$ satisfies
{
$$
\Prob\Big(\Disc(Z^n) \ge \beta^{-1}\log^{2d}(n)(\Delta+1000+100d^2\log n) \Big)\le\beta^{-2}e^{-\frac{\Delta}{50}}.
$$
\normalsize
}
Moreover, in order to apply this strategy the overseer requires $\cO(n \log^{d}n)$ memory and $\cO(n \log^d n)$ computations to produce the first $n$ samples.
\end{thm}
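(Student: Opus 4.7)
The plan is to reduce control of $\Disc(Z^n)$ to a simultaneous high-probability bound on the empirical Haar coefficients $S^n_\alpha := \sum_{i=1}^n h_\alpha(Z_i)$ over a dyadic family $\mathcal{A}_n$ of tensor Haar wavelets of total depth at most $K = \lceil \log_2 n \rceil$. The first step is the classical Haar decomposition of the indicator: for any axis-aligned box $R$, the truncation of $\ind_R$ at depth $K$ is a linear combination of only $O(\log^d n)$ ``corner'' wavelets near $\partial R$, and the truncation error is supported on a boundary strip of Lebesgue measure $O(d/n)$, handled by a standard $\epsilon$-net argument. This reduces the theorem to proving a uniform bound $\max_{\alpha \in \mathcal{A}_n} |S^n_\alpha| \lesssim \beta^{-1}\log^d n\,(\Delta + d^2\log n)$, and provides the first $\log^d n$ factor in the stated bound.

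The heart of the proof is the per-coordinate analysis of the Haar strategy. I expect the strategy to maintain, at each time $t$, a single targeted wavelet $\alpha(t)$ chosen by a hierarchical, priority-based schedule, and on rejectable samples (probability $\beta$) to keep $U_t$ iff $\sgn h_{\alpha(t)}(U_t) \ne \sgn S^{t-1}_{\alpha(t)}$. Writing $K_t\in\{0,1\}$ for the keep indicator, a direct calculation gives, for every $\alpha$,
$$
\mathbb{E}\bigl[h_\alpha(U_t)\,K_t \mid \mathcal{F}_{t-1}\bigr] \;=\; -\beta\int_{R_t} h_\alpha,
$$
with $R_t$ the ``reject region''. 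On $\alpha = \alpha(t)$ this produces a mean-reverting drift of magnitude $\beta V_{\alpha(t)}/2$ toward zero, while Haar orthogonality makes the integral vanish whenever $\mathrm{supp}(h_\alpha)$ is disjoint from, or strictly contained in, $\mathrm{supp}(h_{\alpha(t)})$. Combining this drift with bounded increments via a Freedman-type martingale inequality confines $|S^n_\alpha|$ to a band of width $O(\beta^{-1}\log^d n + \Delta')$ with failure probability $\lesssim \exp(-\Delta'/50)$; a union bound over the $O(n\log^{d-1} n)$ relevant wavelets (those of support volume $\ge 1/n$) absorbs an $O(\log n)$ into $\Delta'$, and multiplying by the earlier $\log^d n$ reduction factor yields the stated $\beta^{-1}\log^{2d}(n)(\Delta + O(d^2 \log n))$ form.

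The main obstacle I foresee is the ``ancestor'' case, where $\mathrm{supp}(h_\alpha) \supsetneq \mathrm{supp}(h_{\alpha(t)})$: here $h_\alpha$ is constant on $\mathrm{supp}(h_{\alpha(t)})$, so the integral above does not vanish and a conditional bias of size $\beta V_{\alpha(t)}/2$ with sign $-\sgn(S^{t-1}_{\alpha(t)})$ contaminates every increment. The argument must show that the self-balancing drift on $S^{t-1}_{\alpha(t)}$ forces this sign to flip often enough that the accumulated ancestor bias matches the martingale fluctuations rather than growing linearly in $t$; the delicate combinatorial point is choosing the schedule $\alpha(t)$ so that each $\alpha$ is targeted at a rate commensurate with $V_\alpha^{-1}$, so that deep wavelets are visited frequently while shallow ones remain balanced throughout. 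For complexity, at time $t$ only the $O(\log^d n)$ Haar wavelets containing $U_t$ in their support see a nonzero increment, so maintaining the coefficient table together with the priority scheduler costs $O(\log^d n)$ time per step and $O(n \log^d n)$ total memory, yielding the stated running-time and space bounds.
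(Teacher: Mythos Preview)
Your proposal rests on a mischaracterization of the Haar strategy. The strategy in the paper does \emph{not} schedule a single target wavelet $\alpha(t)$ per step; it tilts the acceptance density simultaneously toward correcting \emph{every} Haar function of order at most $h=\lfloor\log n\rfloor$:
\[
\lambda_n(x)\;=\;1+\frac{\beta}{2W(h)}\sum_{H\in\cH_1^h}\sgn\langle\nu_n,-H\rangle\,H(x),
\qquad W(h)\asymp\log^d n,
\]
so each wavelet receives a $\beta/W(h)$ share of the available drift. The key point is Haar orthogonality: for $G\neq H$ the $G$-term contributes zero to $\langle H,\lambda_n\rangle$, so there is \emph{no} ancestor contamination whatsoever. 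Each coordinate $|\langle\nu_n,H\rangle|$ is, on its own, a $(\beta/W(h))$-balancing process from the moment $H$ enters the active set (Proposition~\ref{prop: self-correcting}), and a Lyapunov supermartingale (Lemma~\ref{lem: balancing}) gives $\E\exp\bigl(\tfrac{\beta}{2W(h)}|\langle\nu_n,H\rangle|\bigr)\le O(W(h)^2/\beta^2)$ uniformly in $n$. A lattice rectangle of order $\le h$ is then written as an $\ell^1$-combination of Haar functions with total mass at most $(2h)^d$ (Propositions~\ref{props: one rect}--\ref{prop: many rects}), and a union bound over the at most $n^{2d^2}$ lattice rectangles of order $\le d\log n$ yields the theorem; the $100d^2\log n$ in the statement is exactly the price of that union bound, and the approximation of general rectangles by lattice ones (Proposition~\ref{prop: rect-comp}) costs only an additive $O(d)$.

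The one-wavelet-at-a-time scheduler you describe is a \emph{different} algorithm, and the ancestor obstacle you correctly identify for it is precisely why the paper does not take that route. For the actual Haar strategy the obstacle evaporates by orthogonality, and no Freedman inequality or sign-flip counting is needed---the coordinates decouple exactly. Your complexity paragraph reaches the right bounds, but again for the wrong mechanism: the overseer stores $\langle\nu_n,H\rangle$ for all $H\in\cH_1^h$ (at most $O(n)$ functions per shape, $W(h)=O(\log^d n)$ shapes) and updates the $W(h)$ of them whose support contains the new point, giving $O(n\log^d n)$ memory and time (Proposition~\ref{prop:complexity}).
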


%

In section~\ref{sec:Greedy Haar} we suggest a heuristic improvement of our analysis, bringing us to make the following conjecture.

\begin{conjecture}\label{conj:imp1}
The Haar $(1+\beta)$-thinning strategy yields a sequence $Z$ that almost surely satisfies
$$
\limsup_{n\to\infty} \frac{\Disc(Z^n)}{\log^{3d/2+1}(n)} <\infty.
$$%
\end{conjecture}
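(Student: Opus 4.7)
The plan is to improve the bound of Theorem~\ref{thm:main1} by replacing the worst-case aggregation of Haar-level contributions by a concentration-of-measure argument that exploits cancellation between different Haar coefficients at a common level. The loss of $\log^{d/2}n$ in going from the conjectured $O(\log^{3d/2+1}n)$ back to $O(\log^{2d+1}n)$ corresponds roughly to the square root of the number of multi-indices $\vec k$ at a common level $K=|\vec k|_1$, suggesting that a central-limit-style saving is what is needed.

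First, I would revisit the Haar decomposition of the discrepancy. Expanding $\mathbf 1_R$ for $R\in\cR$ in the $d$-dimensional product Haar basis gives
\[
\Disc(Z^n,R)=\sum_{\vec k,\vec j}c_R(\vec k,\vec j)\,D(\vec k,\vec j),\qquad D(\vec k,\vec j):=\sum_{i\le n}h_{\vec k,\vec j}(Z_i),
\]
and for each scale $\vec k$ only $O(2^d)$ of the coefficients $c_R(\vec k,\vec j)$ are non-zero. The per-coefficient control furnished by Theorem~\ref{thm:main1} yields a polylogarithmic bound on $|D(\vec k,\vec j)|$; summing this bound by the triangle inequality over all $\widetilde{O}(\log^d n)$ relevant scales $\vec k$ is what produces the $\log^{2d+1}n$ estimate.

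Second, to refine the bound I would try to show that within each level $K=|\vec k|_1$ the contributions $\sum_{|\vec k|_1=K}c_R(\vec k,\vec j_{\vec k})D(\vec k,\vec j_{\vec k})$ admit a sub-Gaussian tail whose variance proxy is at most the level's maximal term squared times the number of terms. This would typically follow by exhibiting a martingale structure in which the one-step increments across different subscales $\vec k$ with common $|\vec k|_1$ are conditionally orthogonal, and then applying an Azuma--Hoeffding bound per level before a union bound across levels and an $\varepsilon$-net of rectangles $R$. A geometric series in $K$ of within-level sub-Gaussian bounds saving a factor $\sqrt{K^{d-1}}$ gives, up to logarithmic factors, the conjectured $\log^{3d/2+1}n$.

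The chief obstacle I expect is justifying this conditional orthogonality. The Haar strategy's decisions correlate the coefficients $D(\vec k,\vec j)$ in a delicate way, since a single accept/reject choice contributes to every coefficient whose support contains the candidate point, and the strategy itself reacts to the current values of all the $D(\vec k,\vec j)$. A promising route is to couple the process with an auxiliary one in which, at each level, the decisions are made using fresh randomness conditional on the past coefficient values, and then to bound the defect of this coupling using the explicit scale-by-scale structure of the greedy variant described in Section~\ref{sec:Greedy Haar}. If such a coupling can be produced with an error that is itself polylogarithmic, the remaining analysis reduces to the level-by-level concentration step above and the conjectured bound follows.
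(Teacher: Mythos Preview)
The statement you are addressing is a \emph{conjecture} in the paper, not a theorem; the paper does not prove it. What the paper offers is a short heuristic in Section~\ref{sec:Greedy Haar}: the passage from Proposition~\ref{prop: self-correcting} to \eqref{eq: misuse of obs avg multiple} uses Observation~\ref{obs: avg multiple}, i.e.\ a triangle inequality over roughly $\log^d n$ Haar coefficients, and the authors argue informally that since each coefficient $|\langle \ind_R,H\rangle|/\langle H,H\rangle$ is at most $1$, one should expect square-root cancellation among the $\langle \nu_n,H\rangle$ terms, replacing the $\log^d n$ loss by $\log^{d/2} n$. That is the entirety of the paper's justification for Conjecture~\ref{conj:imp1}.

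Your proposal identifies the same mechanism (a CLT-type saving over the $\binom{K+d-1}{d-1}$ multi-indices at level $K$) and correctly names the genuine obstruction: the accept/reject decision at each step depends on \emph{all} the current Haar discrepancies $\langle \nu_n,H\rangle$ simultaneously, so the increments of different $\langle \nu_n,H\rangle$ are neither independent nor conditionally orthogonal in any obvious filtration. Your suggested remedy---coupling to an auxiliary process with fresh per-level randomness and controlling the coupling defect---is not carried out, and there is no indication in the paper (or elsewhere) that such a coupling can be built with only polylogarithmic error. In particular, the ``greedy variant'' in Section~\ref{sec:Greedy Haar} does not supply a scale-by-scale decoupling; it is analyzed only heuristically there as well.

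So your plan is aligned with the paper's own heuristic, and you have put your finger on exactly the step the authors could not complete. But as written it is a proof \emph{strategy}, not a proof: the conditional-orthogonality/coupling step is the whole difficulty, and until it is made rigorous the conjecture remains open.
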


In the same section we also suggest a simplified strategy with the same complexity which we call \emph{greedy-Haar} strategy, which we conjecture to provide an additional improvement over the result above.  Namely

\begin{conjecture}\label{conj:main1}
The greedy-Haar $(1+\beta)$-thinning strategy yields a sequence $Z$ that almost surely satisfies
$$
\limsup_{n\to\infty} \frac{\Disc(Z^n)}{\log^{d+1}(n)} <\infty.
$$%
\end{conjecture}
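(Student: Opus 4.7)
The plan is to follow the Haar-expansion framework already used for Theorem~\ref{thm:main1}, but to exploit the per-step optimisation performed by the greedy-Haar rule in order to save a factor of $\sqrt{\log n}$ per Haar coefficient compared with the non-greedy strategy. Concretely, I expect to show that every relevant Haar coefficient of the empirical measure of $Z^n$ stays at size $O(\log n)$, from which the conjectured $O(\log^{d+1} n)$ discrepancy follows by summing absolute values.

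First, for any axis-aligned hyper-rectangle $R \subseteq [0,1)^d$ I would use the tensor Haar expansion $\ind_R = \sum_{h\in \mathcal{H}} c_{R,h}\, h + E_R$, truncated at resolution $2^{-\lceil \log_2 n \rceil}$ in each coordinate, where $E_R$ is a deterministic boundary remainder of absolute value $O(1)$. The key geometric fact, already exploited in the proof of Theorem~\ref{thm:main1}, is that the indicator of such a rectangle has only $O(\log^d n)$ non-zero coefficients in $\mathcal{H}$, each of $O(1)$ magnitude. Writing $S_h(n) := \sum_{i=1}^n h(Z_i)$, this yields
\[
\Disc(Z^n) \le O(1) + \max_{R} \Big|\sum_{h\in \mathcal{H}} c_{R,h}\, S_h(n)\Big| \le O\bigl(\log^d n\bigr) \cdot \max_{h\in\mathcal{H}} |S_h(n)|,
\]
so it suffices to prove that $\max_{h\in \mathcal{H}} |S_h(n)| = O(\log n)$ almost surely.

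The core step is then this per-coefficient logarithmic bound. I would interpret greedy-Haar as rejecting the current rejectable sample $U$ if and only if keeping it would strictly increase a convex potential of the form $\Phi_n = \sum_{h\in\mathcal{H}} \cosh\bigl(\alpha S_h(n)\bigr)$ for a suitably small constant $\alpha>0$, and attempt to prove an Azar--Broder--Karlin--Upfal / Peres--Talwar--Wieder style drift inequality
\[
\E[\Phi_{n+1} - \Phi_n \mid \mathcal{F}_n] \le -c\beta\,(\Phi_n - |\mathcal{H}|),
\]
for a universal $c>0$, where the factor $\beta$ encodes the rejection probability of the $(1+\beta)$-thinning model. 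Standard exponential-martingale machinery combined with a Borel--Cantelli step along $n=2^k$ would then deliver $\max_{h\in\mathcal{H}} |S_h(n)| = O(\log n)$ almost surely, and with it the conjectured $O(\log^{d+1} n)$ discrepancy bound.

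The hard part is the drift inequality itself. In contrast to the classical two-choice load-balancing setting, where each step affects a single coordinate, a single sample $Z_{n+1}$ updates all $|\mathcal{H}| = O(\log^d n)$ Haar coefficients simultaneously while greedy-Haar must commit to one global accept/reject decision per sample; one therefore has to show that the greedy choice nevertheless extracts a non-trivial expected reduction of the largest deviations. The hope is that the tensor-product and orthogonality structure of $\mathcal{H}$ makes the signs $(h(U))_{h\in\mathcal{H}}$ sufficiently decoupled under a uniform $U$, so that greedily decreasing the exponentially-weighted imbalance on the currently worst coordinates does not systematically inflate the remaining ones; quantifying this, most likely via a careful second-moment control of the cross-terms in $\Phi_{n+1}-\Phi_n$, is the decisive technical difficulty. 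Once this is in place, the extension from two-thinning to general $(1+\beta)$-thinning follows by the same Bernoulli-coupling argument used in the proof of Theorem~\ref{thm:main1}.
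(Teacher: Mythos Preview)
The statement you are attempting to prove is labeled a \emph{conjecture} in the paper; there is no proof to compare against, only the heuristic discussion in Section~\ref{sec:Greedy Haar}. Your proposal is, in effect, a potential-function reformulation of that heuristic, and the step you yourself flag as ``the hard part'' (the drift inequality for $\Phi_n$) is precisely the missing ingredient that makes this a conjecture rather than a theorem.

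Beyond that structural point, two concrete difficulties. First, the greedy-Haar rule as defined in the paper is \emph{not} the rule ``reject if keeping increases $\sum_h\cosh(\alpha S_h)$''. The paper's rule thresholds on $\sum_{H}\sgn\langle\nu_n,-H\rangle\,H(x)$, i.e.\ an unweighted majority vote among the signs, whereas your cosh potential would weight each Haar function by $\sinh(\alpha S_h)$, favouring the largest imbalances. These two rules do not coincide, so any drift inequality you prove for the cosh-greedy rule would not apply to the strategy actually under discussion. Second, your route to $\log^{d+1}n$ goes through the triangle inequality $\Disc(Z^n)\le O(\log^d n)\cdot\max_h|S_h(n)|$ together with $\max_h|S_h(n)|=O(\log n)$. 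The paper's own heuristic (Section~6.1) suggests that greedy-Haar is only $\Theta(h^{-d/2})$-balancing for each $H$, which would give $|S_h(n)|$ of order $\log^{d/2}n$, not $\log n$; for $d\ge3$ your intermediate claim is therefore stronger than what the heuristic supports. The paper reaches the exponent $d+1$ by \emph{combining} this per-coefficient gain with a second conjectured $\log^{d/2}n$ saving in the summation step (exploiting cancellation rather than the triangle inequality, cf.\ Conjecture~\ref{conj:imp1}), and your plan omits that second ingredient entirely.
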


To demonstrate that our constructions are also viable in practice as an alternative for Monte-Carlo i.i.d. sampling we dedicate Section~\ref{sec:experiments} to simulations, comparing the performance of our  strategies with classical Monte-Carlo discrepancy. Further discussion on the potential applications of our results in statistics is provided in Section~\ref{subs:application}.

\section{Related Work}

In this section we briefly survey related work on the power of two-choices and discrepancy theory and present possible applications of our work to numerical integration and statistics.

\subsection{Two-choices and weaker forms of choices}

\emph{The power of two choices} is a phenomenon discovered and popularized by Azar, Broder, Karlin and Upfal \cite{ABKU}, who consider a setting in which the underlying space is the discrete set $[M]=\{1,\dots,M\}$ and the discrepancy is measured with respect to $\cR=\{\{m\}\ :\ m\in [M]\}$. Thinking of the points $Z_1,\dots Z_N$ as balls and of their values in $[M]$ as bins, the authors considered a process where at each step a ball is assigned to the least occupied among two bins chosen uniformly and independently. They show that when $N=\Theta(M)$ this yields with high probability a discrepancy of $\Disc_\cR(Z^N)= (1+o(1))\left(\log \log N/\log 2\right)$ (compare with $\Disc_\cR(U^N)= (1+o(1))\left(\log N/\log \log N\right)$ when $U_i$ are i.i.d. uniform).
When $N\gg M$ their results imply that $\Pr(\Disc_\cR(Z^N)>\Delta{\log M})$ decays exponentially fast in $\Delta$, uniformly in $N$, so that the discrepancy does not grow with $N$. In addition, in this model, the load of a typical bin deviates from $N/M$ by merely a constant (compare with a typical deviation of $\Theta(\sqrt{N/M})$ and $\Disc_\cR(U^N)=\Theta(\sqrt{N\log M/M})$ for $U_i$ i.i.d. uniform). It was later discovered that these results are tight up to a constant in the exponent (see e.g. \cite{PTW}). Note, however, that significantly better iterated log bounds were obtained by Berenbrink, Czumaj, Steger and V\"ocking \cite{BCSV} for the one-sided gap between the load of the most loaded bin and the average load. For a simpler proof see Talwar and Wieder \cite{TW}.

While considering applications of the power of two choices to queuing theory, Mitzenmacher, in his thesis \cite{Mitzenmacher}, suggested the following more robust setting of ``two-choices with errors''. Peres, Talwar and Wieder \cite{PTW} later formulated this process, defining the equivalent $(1+\beta)$-choice process for $\beta\in[0,1]$. In this process, with probability $\beta$ (independent of everything else) the overseer is offered two uniformly distributed independent bins and with probability $(1-\beta)$ only one such bin is offered and no choice is allowed. $(1+\beta)$-thinning processes are closely related to $(1+\beta)$-choice processes.
In fact, a two-thinning set-up is equivalent to the corresponding two-choices set-up where the overseer is oblivious to the second available bin. Extending this argument, we see that every $(1+\beta)$-thinning processes is a $(1+\beta)$-choice process (i.e., every process that could be realized by a $(1+\beta)$-thinning strategy could also be realized by a $(1+\beta)$-choice strategy). On the other hand, Proposition~\ref{prop: choice criterion} below guarantees that every $(1+\beta)$-choice process for $\beta\le \frac{1}{2}$ is a $(1+2\beta)$-thinning process.
As Theorems~\ref{thm:main} and \ref{thm:main1} are obtained for $(1+\beta)$-thinning processes with arbitrarily small $\beta$, they are also valid in the $(1+\beta)$-choice setting.

In the balls and bins setting, both $(1+\beta)$-choice processes and $(1+\beta)$-thinning processes achieve the same asymptotic discrepancy of $\Theta(\log M)$ when $N\gg M$, the same discrepancy that could be achieved by
a two-choices process (this follows from results of \cite{PTW}). On the other hand, if one measures discrepancy by the one-sided maximal load semi-norm given by
 $$\max_{i\in [M]}\#\{n\in[N]\ :\ \xi(n)=i\}-N/M,$$
 then Berenbrink, Czumaj, Steger and V\"ocking \cite{BCSV} show that two-choices process can, in fact, achieve $\Theta(\log \log M)$, while both $(1+\beta)$-choice process and
 $(1+\beta)$-thinning processes still achieve only $\Theta(\log M)$ (again by \cite{PTW}). A similar gap between two choices, $(1+\beta)$-choice and  $(1+\beta)$-thinning for $\beta<1$ exists also in the regime $N\asymp M$, and in this regime both notions obtain no significant improvement over a no-choice setting.
 Curiously, when $\beta=1$, the optimal discrepancy obtainable by two-thinning strategy is $\Theta(\sqrt{\log N/\log\log N})$ which is strictly between the discrepancy in the no-choice setting, which is $\Theta(\log N/\log\log N)$ and the optimum in the 2-choice setting which is $\Theta({\log\log N})$. This is shown in a separate recent note by the second and third author \cite{FG}.

\subsection{Interval subdivision processes}
The case $\Omega=[0,1]$ of our result relates to a long line of investigation of so called \emph{interval subdivision processes}.

An \emph{interval subdivision process} is a sequence of points $(X_i)_{i=1}^\infty$ where $X_i\in [0,1]$.
The intervals of the process at the $n$-th step are the gaps between adjacent points in $(X_i)_{i=1}^n$,
while the \emph{empirical measure} at that step is defined as $\frac1{n}\sum_{i=0}^{n-1} \delta_{X_i}$, where $\delta_j$ is the Dirac delta measure.
When the points are chosen independently according to the uniform distribution on $[0,1]$ we call this
the \emph{uniform interval subdivision process}. By the law of large numbers, the empirical measure of this process
converges to the uniform measure almost surely as $n$ tends to infinity.

In 1975 Kakutani \cite{Kakutani} suggested a couple of alternative models for interval subdivision which he conjectured
to be more regular then the uniform process in the sense that their empirical measures should converge to the uniform distribution more rapidly. In one of these processes, which we refer to here as the \emph{Kakutani process}, the $n$-th point is selected uniformly on the largest interval (observe that there are no ties almost surely).
Kakutani conjectured that the empirical measure of the Kakutani process converges to the uniform measure. This fact was later proved by van Zwet in \cite{Zwet} and independently by Lootgiester in \cite{Lootgiester}.
Once convergence was established it remained to recover in what sense the Kakutani process is more regular than the i.i.d. uniform subdivision.

One natural measure for regularity of the convergence of the empirical measure is the discrepancy of the sequence.
A classical result of Kolmogorov and Smirnov (communicated by Donsker \cite{Donsker}), implies that the difference between $t$ and the empirical measure of interval $[0,t]$ of the uniform interval subdivision process, normalized by a factor of $\sqrt n$ converges to the standard Brownian bridge. Hence, the discrepancy of the uniform interval subdivision process is of
order $\Theta(\sqrt n)$. However, the the interval variation discrepancy of the Kakutani process was not easy to handle, and in the 1980s other properties of the process have been studied (see \cite{Pyke}).
Analysis of the interval variation discrepancy was made possible only in 2004 when Pyke and van Zwet \cite{PZ} were able to compute the empirical process of the  Kakutani process and showed that the difference between $t$ and the empirical measure of the process on the interval $[0,t]$, normalized by a factor of $\sqrt n$, converges to a Brownian bridge with half the standard deviation. In particular, this implied that the Kakutani process achieves an improvement of merely a constant factor in the interval variation discrepancy over the uniform interval subdivision process.

Circa 2014, Benjamini (see \cite{MP} and \cite{Junge}) suggested investigating how a two-choice
variant of the uniform interval subdivision process behaves. One family of algorithms which Benjamini suggested are
\emph{local} algorithms, namely ones in which the player considers only the size of the intervals which
contain the new sampled points. Two natural examples being \emph{max-2} and \emph{furthest-2} whose respective descriptions are ``pick the point located in the larger interval''
and ``pick the point furthest from all previously chosen points''.

Following the work of Maillard and Paquette \cite{MP} who studied other properties of the \emph{max-2} process,
Junge \cite{Junge} showed that the empirical measure of the \emph{max-2} process indeed converges to the uniform measure. However, both simulations and comparison with Kakutani processes, indicate that max-2 is likely to be at most as regular as the Kakutani process, thus demonstrating discrepancy of $\Theta(\sqrt n)$. This has been the primary instigator of our present work, where we show that that even in the weaker setting of $(1+\beta)$-thinning, by adopting a \emph{global} strategy the player can obtain a near optimal interval variation discrepancy of $O(\log^3 n)$.

\subsection{Discrepancy theory} Discrepancy theory -- the study of discrete objects which imitate regularity properties of a continuous counterpart, has, in fact, originated at the study of low discrepancy sequences with respect to the uniform measure on $[0,1)^d$, the very object investigated here. Traditionally, this theory is concerned with deterministic objects, trying to obtain bounds on the lowest discrepancy possible for prefixes $X_1,\dots,X_n$ of a sequence $X= \{X_i\}_{i\in\N}$ of points in $[0,1)^d$.

In $d\ge 2$ the exact optimal asymptotic behavior of the discrepancy is unknown. There exist explicit constructions of sequences $X$ whose discrepancy is $\Disc(X^n) \leq C_d \log^dn$ while for every sequence $X$ it is known (by \cite{Bilyk}) that there exist infinitely many $n$-s such that  $\Disc(X^n) \ge c_d \log^{(d+1+\gamma_d)/2}(n)$,
where and $c_d,C_d, \gamma_d \in (0,1)$ are constants depending on dimension.

The upper bound is achieved using \emph{lattice rules} or \emph{digital nets}---for example, Hammersley point sets which are based on the infinite van der Corput sequence (see, e.g., \cite{Bilyk-chapter})
achieve the upper bound.
The lower bounds were obtained by Bilyk, Lacey and Vagharshakyan \cite{Bilyk}, building upon the work of Roth~\cite{Roth}.
We remark that arguments involving Haar wavelets, which play a key role in our construction, are used to prove lower bounds in classical literature (see, e.g., Ch. 3 of the book \cite{Chazelle}).
While there seem to be no prior work involving Haar wavelets as a tool for obtaining computationally-efficient constructive upper bounds, there exist recent works  \cite{ChenSkri1}, \cite{ChenSkri2}, \cite{Skri} which use the related Walsh wavelets to control an $L_2$ notion of discrepancy (weaker than the $L_\infty$ notion we consider).

Discrepancy theory is the main motivation for Conjecture~\ref{conj:main1}, as this conjecture would establish that online thinning typically achieves discrepancy which deviates by merely $\log n$ factor from the minimal discrepancy of any infinite sequence.

\subsection{Applications}\label{subs:application}

In this section we list a few potential applications of our results.
It is important to notice that our output sequence has the desirable property that it is unbiased. This is expressed in the following claim.
\begin{claim}\label{clm: unbiased}
Let $Z$ be the output sequence of either the Haar $(1+\beta)$-thinning strategy
or the greedy-Haar $(1+\beta)$-thinning strategy. Then for any integrable $f$ we have
$$\mathbb{E}\left[\frac1n\sum_{i=1}^n f(Z_i)\right] = \int_{[0,1)^d} f(x) dx.$$
\end{claim}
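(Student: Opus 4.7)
The plan is to reduce the claim to showing that each individual $Z_i$ has the marginal distribution $\mathrm{Unif}([0,1)^d)$; once established, linearity of expectation gives $\mathbb{E}\bigl[\frac{1}{n}\sum_{i=1}^n f(Z_i)\bigr] = \frac{1}{n}\sum_{i=1}^n \int f\, dx = \int f\, dx$, which is the desired identity.

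My main tool would be an equivariance-and-symmetry argument. For a dyadic box $B\subseteq[0,1)^d$ and a coordinate direction $j\in[d]$, let $\tau_{B,j}$ denote the measure-preserving involution of $[0,1)^d$ that reflects $B$ across its midplane orthogonal to the $j$-th axis and fixes $[0,1)^d\setminus B$. The first and crucial step is to verify that both the Haar and greedy-Haar $(1+\beta)$-thinning strategies are equivariant under each such $\tau_{B,j}$: if the i.i.d.\ input stream $(U_k)$ is replaced by $(\tau_{B,j}(U_k))$ while the rejectability coin flips and any other auxiliary randomness are held fixed, then the output becomes $(\tau_{B,j}(Z_k))$. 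Heuristically this should follow because the rejection rules of these strategies depend on the kept history only through empirical counts in dyadic subboxes (equivalently, through empirical Haar coefficients); reflecting the input reflects those counts covariantly, and the region targeted for suppression reflects along with them.

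Once equivariance is in hand, I would combine it with the $\tau_{B,j}$-invariance of the law of $(U_k)$ (i.i.d.\ uniform) to conclude that the joint law of $(Z_k)$ is $\tau_{B,j}$-invariant, so $Z_i \stackrel{d}{=} \tau_{B,j}(Z_i)$ for each $i$. Finally, I would observe that the group generated by the reflections $\{\tau_{B,j}\}$ acts transitively on dyadic boxes of any fixed scale, so the law of $Z_i$ assigns equal mass to any two dyadic boxes of equal volume; since dyadic boxes generate the Borel $\sigma$-algebra of $[0,1)^d$, Carathéodory extension forces $Z_i \sim \mathrm{Unif}([0,1)^d)$, completing the reduction.

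The hard part will be verifying the equivariance property: it is the only step whose verification is not purely formal, and it genuinely requires unpacking the constructions of the Haar and greedy-Haar strategies given in Section~\ref{sec:Greedy Haar} and checking, rule by rule, how reflecting the input transforms each internal quantity on which the decision to reject depends. The remainder of the argument is a short symmetrization that should go through without difficulty.
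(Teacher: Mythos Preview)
Your plan is essentially the paper's own argument: the paper also proves the claim by a symmetry/equivariance coupling, showing that if the input stream is transformed by a measure-preserving bijection $g$ that respects the dyadic structure, then the output transforms by the same $g$; together with the $g$-invariance of the i.i.d.\ uniform law this forces equal mass on any two dyadic boxes of the same shape. The only substantive difference is the choice of symmetry group. The paper uses \emph{bit-flip} maps: for each coordinate $i$ and each set $D_i$ of binary digits, $g$ flips exactly the digits in $D_i$ of the $i$-th coordinate. These are tensor products of one-dimensional maps and manifestly permute dyadic rectangles of each fixed shape.

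Your reflections $\tau_{B,j}$, however, do \emph{not} have the required equivariance property when $d\ge 2$ and $B$ is a genuine box rather than a slab. Concretely, take $d=2$, $B=[0,\tfrac12)^2$, $j=1$. The dyadic rectangle $D=[0,\tfrac14)\times[0,1)$ satisfies
\[
\tau_{B,1}(D)=\big([\tfrac14,\tfrac12)\times[0,\tfrac12)\big)\;\cup\;\big([0,\tfrac14)\times[\tfrac12,1)\big),
\]
which is not a rectangle at all. Thus $\tau_{B,1}$ does not permute dyadic boxes, so your heuristic ``reflecting the input reflects those counts covariantly'' breaks down: the empirical count $\nu_n(D)$ is not determined by the dyadic box counts of $(\tau_{B,1})_*\nu_n$. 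Correspondingly, $H_R\circ\tau_{B,1}$ need not be $\pm$(a single Haar function): for $R=[0,\tfrac12)\times[0,1)$ and the same $B$, a short computation gives a function supported on a non-rectangular set. Since both the Haar and greedy-Haar rules are defined through signs of individual Haar coefficients, this ruins the rule-by-rule verification you anticipate.

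The fix is painless: restrict to \emph{slab} reflections, i.e.\ $B=J\times\prod_{i\ne j}[0,1)$ for a dyadic interval $J$ in coordinate $j$, or simply use the paper's bit-flip maps. In either case the symmetry is a tensor product of one-dimensional maps, hence permutes dyadic rectangles shape by shape and sends each Haar function to $\pm$(another Haar function of the same order). Your equivariance check then goes through verbatim, and your transitivity-on-boxes-of-a-fixed-shape argument (which already works coordinatewise) yields $Z_i\sim\mathrm{Unif}([0,1)^d)$ as you intended.
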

We postpone the proof of this claim to Section~\ref{subs:unbiased}.
Next, we divide the description of potential applications to one-dimensional and multi-dimensional.

{\bf One dimensional applications to statistics.}
Our results could be used to obtain a new method for on-line sample thinning in statistics.
Typically, thinning is not an effective practice in statistics. However, there are settings in which it is actually beneficial.
To make this concrete lets us illustrate the application of our method through an example from botany. Consider a setting in which a researcher
wishes to assess the expectation of a parameter $Y$ - the amount of a certain bacteria on a type of wild plants. It is well known that $Y$ is strongly dependent in an unknown yet smooth way on the mass of the sampled plant, a well studied parameter which we denote by $X$.
To obtain $Y$ the researcher must harvest the plant, keep it in cold storage and run an expensive procedures,
hence it is much more costly to assess $Y$ for any particular sample than to measure $X$. The researcher now travels in the jungle and measures $X$ for different plants,
he can then either discard them or keep them for measuring $Y$. By applying our results to the percentile distribution of $X$ (which is uniform by definition), we can thin an arbitrarily low percentage of our samples on-line and obtain an empirical percentile distribution of the samples of $X$ which has discrepancy of $O(\log^3n)$ rather than $O(\sqrt{n})$ discrepancy without any thinning.
As a result the average of sampled $Y$ will suffer from less variance caused by the variance of the sampled values of $X$. Hence the researcher will be able to obtain better precision for a given cost. Notice that by Claim~\ref{clm: unbiased} this method will not create any bias in the estimate of $\E(Y)$.

Other settings in which a similar application is viable include Experimental agriculture, where an organism (a plant or an animal) is raised and the parameter $X$ could be assessed at a much earlier stage of growth in comparison with $Y$ and Monte-carlo simulations in which $Y$ is obtained from $X$ by heavy computations. To read more on the benefit of thinning for Markov chain Monte-carlo (MCMC) samplers in a similar setting, see a recent work by Owen~\cite{Owen}.

{\bf Multi-dimensional applications.}
While the law of large numbers guarantees that a sequence of $n$ independent uniform random converge to the uniform distribution, the rate of this convergence is often slower than desired for practical applications. One setting where this is the case is that of Monte-Carlo numerical integration.
In this setting one approximates an intractable continuous integral $\int f(x) dx$ by a discrete average $\frac1n\sum_{i=1}^n f(U_i)$ for uniform $U_i$.
For any arbitrary point sequence
$\mathcal{P}$, and compact subset $\mathfrak F$ of a Banach space, Holder's inequality implies that $|\frac1n\sum_{i=1}^n f(P_i) - \int f(x) dx| \leq \sup_{f \in \mathfrak F} \frac1n\|f\| \Disc(\mathcal{P}_1^n)$ for appropriate norms $\|\cdot\|$ that measure variation of functions. This is called the ``Koksma-Hlawka'' inequality when discrepancy is measured by axis-aligned rectangles and the functions have bounded ``Hardy-Krause'' variation (bounded mixed partial derivatives). Since $\Disc(U^n)=\Theta(\sqrt{n})$, the Monte-Carlo sum converges at a $1/\sqrt{n}$ rate to the integral.

One can achieve a much better rate of convergence by replacing random i.i.d. sequences by non-i.i.d. random sequence or even by a deterministic pseudo-random sequence that has lower asymptotic discrepancy than $U_1^n$.
As a result the theory of numerical and \emph{Quasi}-Monte-Carlo (QMC) integration have found applications to several results from discrepancy theory.
For more details on bounds related to discrepancy theory and QMC, readers may refer to the books \cite{Chazelle, Dick2, HAR} and the surveys \cite{Kuo,Dick} and the references therein.

Our sampling algorithm also provides an unbiased estimate for the integral (by Claim~\ref{clm: unbiased}). While the discrepancy and complexity of the algorithm are not as good as low discrepancy methods such as digital nets, it has the benefit of working even in setting where one cannot choose the points at which the function is evaluated. Moreover the output sequence has less structure than lattices based constructions.

Finally, considering application where actual thinning is undesirable, we remark that if rather than allowing to discard every point with probability $1-\beta$, we instead weight each point with a weight of either $1$ or $1-\beta$, then our result can be shown to persist.
%
%
%

\section{Preliminaries}\label{sec: prelim}
In this section we formally define \emph{$(1+\beta)$-thinning strategies}, and related notions that are useful for our proofs.
We then give a sufficient condition that describes which distributions can be realized by a single step of $(1+\beta)$-thinning, and provide a few technical lemmata required to prove Theorem~\ref{thm:main1}. Throughout we follow the convention that the notation $\log$ denotes the logarithm with base $2$.


\subsection{Thinning functions and strategies}\label{subs: thing strat}

A \emph{thinning function} is a measurable function $f:[0,1)^{d} \to [0,1]$. We think of the input of such a function as a random element in $[0,1)^d$, and of its output as the probability that we decide to keep the chosen element. Formally, given $X_1, X_2\in[0,1)^d$ and an independent $U_1\sim\U[0,1)$, we let $Z'$ be equal to $X_1$ if $U_1\le f(X_1)$ and equal to $X_2$ otherwise. We call $Z_1$ the \emph{two-thinned sample} produced by $f$.

A \emph{two-thinning strategy} is an instrument instructing the overseer how to choose a thinning function to produce $Z_n$ given $Z_1,\dots, Z_{n-1}$. Formally, such a strategy is a countable collection of measurable functions
$f_n:([0,1)^{d})^{n-1} \times [0,1)^{d} \to [0,1]$, such that for every fixed value of the first $n-1$ entries, the function on the last entry is a thinning function.

A {two-thinning} strategy is applied to produce a random \emph{two-thinning sequence} in the following way. Denote by $X$ a sequence of i.i.d. uniform random variables on $[0,1)^d$. We now inductively define $Z$ as a subsequence of $X$ produced by the strategy. To do so, we shall employ $U=\{U_n\}_{n\in\N}$ a sequence of i.i.d. $\U[0,1]$ random variables, independent from everything else, serving as an external source of randomness. Given $Z_1,\dots, Z_{n-1}$, inductively define
$$\chi_n = \ind\{U_n>f_n((Z_1,\dots,Z_{n-1}),X_{n+\sum_{i=1}^{n-1}\chi_i})\}.$$
Here, $\chi_n$ represents the decision whether to reject (1) or keep (0) in the $n$-th step so that $\sum_{i=1}^n \chi_i$ is the number of rejections made by our algorithm in the process of allocating the first $n$ balls. Using these we set $Z_n = X_{n + \sum_{i=1}^{n}\chi_i}$.
Observe that, conditioned on $Z_1,\dots,Z_{n-1}$, the variable $Z_n$ indeed has
the distribution of a two-thinning sample according to $f(\cdot)=f_n((Z_1,\dots,Z_{n-1}),\cdot)$.

\subsection{\texorpdfstring{$(1+\beta)$}{1+beta}-thinning strategy}
Given a fixed $\beta\le 1$, a thinning function $f$ satisfying $f\ge  1-\beta$ almost surely is called a \emph{$(1+\beta)$-thinning function} and a two-thinning sample of such a function is called a \emph{$(1+\beta)$-thinned sample}. A \emph{$(1+\beta)$-thinning strategy} is a two-thinning strategy which, for every given $Z_1,\dots, Z_{n-1}$, satisfies that $f(x)=f_n((Z_1,\dots,Z_{n-1}),x)$ is an $(1+\beta)$-thinning function. Observe that such a strategy rejects each sample, conditioned on the past, with probability at most $\beta$ and that the case $\beta=1$ coincides with our previous definitions.

\subsection{Distribution realization via \texorpdfstring{$(1+\beta)$}{1+beta}-thinning}
In this section we provide a sufficient condition for a distribution on $[0,1)^d$ to be realizable as a $(1+\beta)$-thinned sample.

\begin{propos}\label{prop: choice criterion}
Let $\mu$ be an absolutely continuous probability measure on $[0,1)^d$ whose density $g$ satisfies
\[
1-\frac{\beta}2\le g(x)\le 1+\frac{\beta}2.
\]
Then, $f(x)=g(x)-\frac{\beta}2$ defines a $(1+\beta)$-thinning function whose
  $(1+\beta)$-thinned sample is distributed according to $\mu$.
\end{propos}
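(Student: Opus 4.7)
The plan is a short direct computation in two steps: verify that $f$ is a valid $(1+\beta)$-thinning function, and then compute the distribution of the resulting thinned sample and check it equals $\mu$.

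First, I would check the range of $f$. Since $g$ is a density with $1-\beta/2 \le g(x) \le 1+\beta/2$, the function $f(x) = g(x) - \beta/2$ satisfies $1-\beta \le f(x) \le 1$. In particular $f$ takes values in $[0,1]$ (using $\beta \le 1$), so it qualifies as a thinning function, and the lower bound $f \ge 1-\beta$ is exactly the condition required for $f$ to be a $(1+\beta)$-thinning function as defined in Section~\ref{subs: thing strat}.

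Next, I would compute the law of the two-thinned sample $Z_1$ produced by $f$. Let $X_1,X_2$ be i.i.d.\ uniform on $[0,1)^d$ and $U_1$ an independent uniform on $[0,1]$, and recall $Z_1 = X_1$ if $U_1 \le f(X_1)$ and $Z_1 = X_2$ otherwise. For any measurable $A \subseteq [0,1)^d$, independence gives
\[
\Pr(Z_1\in A) = \Pr(X_1\in A,\,U_1\le f(X_1)) + \Pr(U_1>f(X_1))\cdot\Pr(X_2\in A).
\]
The first term equals $\int_A f(x)\,dx$, and the second equals $\bigl(1-\int_{[0,1)^d} f\bigr)\,|A|$, where $|A|$ is Lebesgue measure.

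Finally, substituting $f = g - \beta/2$ and using $\int g\,dx = 1$ (since $g$ is a density), we get $\int f\,dx = 1-\beta/2$, so
\[
\Pr(Z_1\in A) = \int_A \bigl(g(x)-\tfrac{\beta}{2}\bigr)\,dx + \tfrac{\beta}{2}\,|A| = \int_A g(x)\,dx = \mu(A).
\]
This identifies $Z_1$ with $\mu$ and completes the proof. There is no real obstacle here: the content is essentially bookkeeping, and the choice $f = g - \beta/2$ is designed so that the uniform ``default'' mass contributed by $X_2$ exactly fills in the constant deficit $\beta/2$ to recover $g$.
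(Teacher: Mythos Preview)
Your proof is correct and follows essentially the same direct computation as the paper's own proof; the only difference is that you explicitly verify the range condition $1-\beta\le f\le 1$ making $f$ a $(1+\beta)$-thinning function, which the paper leaves implicit.
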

\begin{proof}
Let $X_1,X_2\sim\U([0,1)^d)$ and $U_1\sim \U([0,1])$, independent from one another and  let $Z'$ be equal to $X_1$ if $U_1\le f(X_1)$ and to $X_2$ otherwise, so that $Z'$ is  a $(1+\beta)$-thinned sample of $f$. We compute
\begin{align*}
\Prob(Z'\in A)&=
\Prob\Big(X_1\in A,U_1\le f(X_1)\Big) + \Prob\Big(X_2\in A,U_1 > f(X_1)\Big)\\
& =\int_A \left(g(z)-\frac{\beta}{2}\right) dz +  \int_{[0,1)^d} \left(1-g(z)+\frac{\beta}2\right)  dz\cdot |A| \\
&= \left(\mu(A)-\frac{\beta}2 |A|\right)+\left(1-1+\frac{\beta}2\right)\mathcal |A|=\mu(A),
\end{align*}
where $|A|$ is the Lebesgue measure of $A$. The proposition follows.
\end{proof}
Proposition~\ref{prop: choice criterion} is pivotal in the indirect constructions of this paper. Rather than describing
thinning functions we shall describe a discrete time stochastic process on $[0,1)^d$ whose $n$-th entry represents the location of the $n$-th ball. We then show that almost surely at every step the distribution of the next ball is realizable as a $(1+\beta)$-thinned sample for some easily computable $f$.

\subsection{Processes defined via a conditional density function}

Let $(\Omega,\F)$ be the measurable space on $([0,1)^d)^{\N}$ with the sigma field generated by the cylindrical Borel topology.
We call an $(\Omega,\F)$-measurable random variable \emph{a discrete time process} on $[0,1)^d$. Each process $Z=\{Z_n\}_{n\in\N}$ of this sort is associated with a counting process
$\nu=\{\nu_n\}_{n\in\N}$ defined by $\nu_n = \sum_{i=1}^n\delta_{Z_i}$ where $
\delta_x$ is a dirac delta measure at $x$. We will only concern ourselves with processes whose counting measure $\nu$ is Markovian. That is,
$$ \nu_n\ |\ \nu_1,\dots, \nu_{n-1}\overset{d}{=}
\nu_n\ |\ \nu_{n-1}.$$
These are processes satisfying that the distribution of $Z_n$ depends only on the overall locations of the previous $n-1$ balls, and not on their order.

One way to construct a exchangeable discrete time process on $[0,1)^d$ is via a \emph{conditional density function}, which we define as
sequence of measurable functions
$\lambda_n(\nu)$, each of which takes as input a counting measure of $n$ elements in $[0,1)^d$ and produces a density function $\lambda_n$ of a probability measure on $[0,1)^d.$
Given such $\lambda_n$, we write
\[
\lambda^A_n(\nu)=\int_A\lambda_n(\nu)(x)dx,
\]
 for every measurable $A\subset [0,1)^d$.%

Given such a {conditional density function} $\lambda$, we define the process $Z$ associated with it by
\begin{align}
\qquad\Pr\Big(Z_{n}\in A\ \Big|\ \{Z_i\}_{i< n}\Big) &= \lambda^{A}_{n}\left(\nu_{n-1}\right). \label{eq: cond-int-rate}
\end{align}

We call $Z$ the process associated with counting measure $\nu$ and conditional density $\lambda$.

\subsection{Balancing pairs}
Let $Z$ be a process on $[0,1)^d$ associated with counting measure $\nu$ and conditional density $\lambda$.
Given two disjoint sets $A,B\subset [0,1)^d$ satisfying $|A|=|B|=\kappa$,
we say that $Z$ is \emph{$\theta$-balancing} with respect to the pair $\{A,B\}$ from time $s\in \N$ if almost surely,
$\kappa\le \lambda_n^{A} +\lambda_n^{B} \leq 3\kappa$ and
\begin{equation}\label{eq: balancing property}
\begin{split}
\lambda^A_n\ge \lambda^B_n + \theta\kappa& \quad \text{ if }\nu_n(A)< \nu_n(B) ,\\
\lambda^B_n\ge \lambda^A_n + \theta\kappa& \quad \text{ if }\nu_n(B)< \nu_n(A) ,
\end{split}
\end{equation}
for all $n\ge s$.

We now turn to show a key concentration property of balancing pairs. Assume that $Z$ is $\theta$-balancing with respect to $\{A,B\}$ from time $s$ and let $n\ge s$. By Equation~\eqref{eq: cond-int-rate} we have,
\begin{align}
\Pr\Big(Z_n\in A\ \Big|\ \{Z_i\}_{i<n},\ Z_n\in A\cup B, \nu_n(B) < \nu_n(A) \Big) &= \frac{\lambda^{A}_{n}\!\left(\nu_{n-1}\right)}
{\lambda^{A}_{n}\!\left(\nu_{n-1}\right)+\lambda^{B}_{n}\!\left(\nu_{n-1}\right)} \notag \\ &
\le \frac{\frac12\Big(\lambda^{A}_{n}\!\left(\nu_{n-1}\right)+\lambda^{B}_{n}\!
\left(\nu_{n-1}\right)-\theta\kappa\Big)}
{\lambda^{A}_{n}\!\left(\nu_{n-1}\right)+\lambda^{ B}_{n}\!\left(\nu_{n-1}\right)}\notag\\
&=
\frac12 - \frac12\frac{\theta \kappa}
{\lambda^{A}_{n}\!\left(\nu_{n-1}\right)+\lambda^{ B}_{n}\!\left(\nu_{n-1}\right)}
\leq \frac12 - \frac{\theta}{6}. \label{eq: balancing major property}
\end{align}%

\subsection{Concentration bounds for balancing processes}
The following lemma shows that being $\theta$-balancing with respect to a pair $\{A, B\}$ implies exponential concentration of the difference between the number of balls in $A$ and $B$.

\begin{lemma}\label{lem: balancing}
Let $s\in \N$, $ 0 < \theta < 1$ and $A,B\subset [0,1)^d$ be disjoint.
If $Z$ is a process on $[0,1)^d$ which is $\theta$-balancing with respect to $\{A,B\}$ from time $s$ and
satisfies $\E\Big(\exp\big(\theta \frac{|\nu_s(A)-\nu_s(B)|}{2}\big)\Big)\le \frac{150}{\theta^2}$,
then for all $n\ge s$ we have
$$\E\Bigg(\exp\Big(\theta \frac{|\nu_n(A)-\nu_n(B)|}{2}\Big)\Bigg)\le \frac{150}{\theta^2}.$$
\end{lemma}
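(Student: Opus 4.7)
The plan is to proceed by induction on $n\ge s$, with the base case $n=s$ furnished by the hypothesis $\E[\exp(\theta|\nu_s(A)-\nu_s(B)|/2)]\le 150/\theta^2$. Setting $D_n:=\nu_n(A)-\nu_n(B)$ and $M_n:=\exp(\theta|D_n|/2)$, the inductive step will reduce to establishing a one-step affine recursion of the form
$$\E[M_n]\le (1-c_1\kappa\theta^2)\,\E[M_{n-1}]+c_2\kappa\theta,$$
with absolute constants $c_1,c_2>0$ chosen so that $150/\theta^2$ is a stable fixed point of the right-hand side for every $\theta\in(0,1)$ and every admissible $\kappa$.

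To derive such a recursion I would condition on $\mathcal F_{n-1}:=\sigma(Z_1,\ldots,Z_{n-1})$ and decompose $\E[M_n\mid\mathcal F_{n-1}]$ by the three events $Z_n\in A$, $Z_n\in B$, $Z_n\notin A\cup B$, with respective conditional probabilities $p_n^A:=\lambda_n^A(\nu_{n-1})$, $p_n^B:=\lambda_n^B(\nu_{n-1})$, and $1-q$, where $q:=p_n^A+p_n^B\in[\kappa,3\kappa]$. When $|D_{n-1}|\ge 1$, the balancing property~\eqref{eq: balancing property} gives $\sgn(D_{n-1})(p_n^A-p_n^B)\le -\theta\kappa$, and the elementary symmetrisation
$$p_n^A e^{\theta/2}+p_n^B e^{-\theta/2}=q\cosh(\theta/2)+(p_n^A-p_n^B)\sinh(\theta/2)$$
then yields
$$\E[M_n\mid\mathcal F_{n-1}]\le M_{n-1}\Bigl[1+q(\cosh(\theta/2)-1)-\theta\kappa\sinh(\theta/2)\Bigr]\le M_{n-1}\bigl(1-\kappa\theta^2/14\bigr),$$
using the standard estimates $\cosh(\theta/2)-1\le \theta^2/7$ and $\sinh(\theta/2)\ge \theta/2$ valid on $(0,1]$. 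In the corner case $D_{n-1}=0$, one has $M_{n-1}=1$ and $|D_n|\le 1$, hence $\E[M_n\mid\mathcal F_{n-1}]\le 1+q(e^{\theta/2}-1)\le 1+3\kappa\theta$, a bounded additive correction rather than a contraction.

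Combining both cases and taking total expectation gives $\E[M_n]\le (1-\kappa\theta^2/14)\,\E[M_{n-1}]+4\kappa\theta$, and substituting the inductive hypothesis yields $\E[M_n]\le 150/\theta^2-\kappa(150/14-4\theta)\le 150/\theta^2$ since $\theta<1$, closing the induction. The main obstacle I anticipate is the numerical calibration at the borderline case $q=3\kappa$: since the balancing hypothesis only bounds $q$ from above by $3\kappa$, the growth term $q(\cosh(\theta/2)-1)$ comes within a constant factor of the drift term $\theta\kappa\sinh(\theta/2)$, and the constants must be tracked carefully to guarantee a strictly contracting factor on $M_{n-1}$ rather than a merely neutral one. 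Beyond this calibration the argument is a routine drift analysis for a random walk on the nonnegative integers with a reflecting barrier at the origin.
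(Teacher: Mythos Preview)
Your proposal is correct and follows essentially the same drift/Lyapunov argument as the paper: both establish a one-step affine recursion $\E[M_n\mid\mathcal F_{n-1}]\le \alpha M_{n-1}+\beta$ with $\alpha<1$, with the only differences being that the paper routes through the conditional-probability bound~\eqref{eq: balancing major property} (obtaining $\cosh(\theta/2)-\tfrac{\theta}{3}\sinh(\theta/2)\le e^{-\theta^2/24}$) and packages the induction as a separate super-martingale lemma, whereas you apply the raw balancing inequality $\sgn(D_{n-1})(p_n^A-p_n^B)\le -\theta\kappa$ directly and perform the induction inline. Your numerical calibration ($3/7<1/2$ giving contraction factor $1-\kappa\theta^2/14$, additive term $4\kappa\theta$, fixed point check $150/14>4$) is sound.
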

%
%

To show Lemma~\ref{lem: balancing} we shall employ the following super-martingale type criterion.

\begin{lemma}\label{lem : martingale}
Let $(M_k)_{k\ge0}$ be random variables taking values in $\R_+$ which satisfy
\[\E ( M_k\ |\ F_{k-1}) \le \alpha M_{k-1} + \beta\]
for some $0<\alpha<1$, $\beta>0$, where $F_{n}=\sigma\big((M_k)_{0\le k\le n}\big)$.
Then
\[ \E(M_k)\le \left(1-\alpha^k\right)\frac{\beta}{1-\alpha} + \alpha^k \E(M_0) \quad\quad \text{for all $k$.}\]
\end{lemma}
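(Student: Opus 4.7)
The plan is to reduce the conditional recursion to an unconditional one by taking expectations, and then iterate. Specifically, applying the tower property to the hypothesis $\E(M_k \mid F_{k-1}) \le \alpha M_{k-1} + \beta$ yields the deterministic scalar recursion
\[
\E(M_k) \le \alpha \, \E(M_{k-1}) + \beta,
\]
which reduces the problem to bounding solutions of a linear first-order inequality with constant coefficients.

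I would then prove the claimed bound by induction on $k$. The base case $k=0$ is trivial since the right-hand side equals $\E(M_0)$. For the inductive step, assuming the bound holds for $k-1$, plug into the scalar recursion:
\[
\E(M_k) \le \alpha \left( (1-\alpha^{k-1}) \frac{\beta}{1-\alpha} + \alpha^{k-1} \E(M_0) \right) + \beta
= (\alpha - \alpha^k) \frac{\beta}{1-\alpha} + \alpha^k \E(M_0) + \beta,
\]
and then observe that $(\alpha - \alpha^k)\frac{\beta}{1-\alpha} + \beta = \frac{\beta(\alpha - \alpha^k + 1 - \alpha)}{1-\alpha} = (1 - \alpha^k)\frac{\beta}{1-\alpha}$, closing the induction.

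Alternatively, one can avoid induction by unrolling the recursion directly: iterating $\E(M_k) \le \alpha \E(M_{k-1}) + \beta$ gives
\[
\E(M_k) \le \alpha^k \E(M_0) + \beta \sum_{j=0}^{k-1} \alpha^j = \alpha^k \E(M_0) + \beta \cdot \frac{1 - \alpha^k}{1-\alpha},
\]
using the geometric series formula (valid since $\alpha < 1$). There is no real obstacle here; the lemma is an entirely routine fact about affine recursions, and non-negativity of $M_k$ is used only implicitly to ensure all terms are well-defined. The lemma will presumably be applied in the proof of Lemma~\ref{lem: balancing} with $M_k$ of the form $\exp(\theta |\nu_{s+k}(A) - \nu_{s+k}(B)|/2)$ and with $\alpha$ close to $1$, so the stationary bound $\beta/(1-\alpha)$ will govern the claimed uniform-in-$n$ control.
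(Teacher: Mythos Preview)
Your proposal is correct and matches the paper's own proof: the paper simply applies the tower property to obtain $\E(M_k)\le \alpha\,\E(M_{k-1})+\beta$ and then says ``using induction over $k$ the lemma follows.'' You have spelled out the induction (and the equivalent unrolling) explicitly, but the approach is identical.
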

\begin{proof}
\[\E(M_k)=\E(\E( M_k\ |\ F_{k-1}))\le \alpha  \E(M_{k-1}) + \beta.\]
Using induction over $k$ the lemma follows.
\end{proof}
We are now ready to prove Lemma~\ref{lem: balancing}.

\begin{proof}[Proof of Lemma~\ref{lem: balancing}]

Let $A, B\subset [0,1)^d$ be disjoint, $0<\theta<1$ and $s\in \N$, and assume that
$Z$ is a process on $[0,1)^d$ which is $\theta$-balancing with respect to $\{A,B\}$ from time $s$, which satisfies
\begin{equation}
\E\Bigg(\exp\Big(\theta \frac{|\nu_s(A)-\nu_s(B)|}{2}\Big)\Bigg)\le \frac{150}{\theta^2}
\label{eq: assumpt}
\end{equation}
Writing $\mathcal F_{n}=\sigma\big((Z_k)_{0\le k\le n}\big)$, we observe that for any $b>0$,
\begin{align*}
\E\Bigg(\exp\Big(\theta \frac{|\nu_n(A)-\nu_n(B)|}{2}\Big)&\ \Big|\ \exp\Big(\theta \frac{|\nu_{n-1}(A)-\nu_{n-1}(B)|}{2}\Big)
=e^{\frac{b \cdot \theta}2}, \mathcal F_{n-1} \Bigg)
\\&\le e^{\frac{b \cdot \theta}2} \Bigg(1-\kappa+\kappa\left(\frac{1-\theta/3}2e^{\frac{\theta}2}
+\frac{1+\theta/3}2e^{-\frac{\theta}2}\right)\Bigg)
\\ &= e^{\frac{b \cdot \theta}2}\Bigg(1-\kappa+\kappa\left(\cosh \frac{\theta}{2}-\frac{\theta}3\sinh \frac{\theta}{2}\right)\Bigg)
\\ &\le e^{\frac{b \cdot \theta}2} \Bigg(1-\kappa(1-e^{-\frac{\theta^2}{24}})\Bigg)
\end{align*}
Where the first inequality follows from \eqref{eq: balancing major property} and the last inequality uses a Taylor expansion of $\cosh(x)$ and $\sinh(x)$.

For $b=0$, we have
\begin{align*}
\E\Bigg(\exp\Big(\theta \frac{|\nu_n(A)-\nu_n(B)|}{2}\Big)\ \Big|\ &\exp\Big(\theta \frac{|\nu_{n-1}(A)-\nu_{n-1}(B)|}{2}\Big)=1, F_{n-1} \Bigg)
\\&\le 1-3\kappa+ 3\kappa e^{\frac{\theta}2}
\\&\le 1-\kappa(1-e^{-\theta^2/24})+3\kappa e^{\frac{\theta}2}.
\end{align*}

Using the above bounds, we obtain

\begin{align*}
\E\Bigg(\exp\Big(\theta \frac{|\nu_n(A)-\nu_n(B)|}{2}\Big)&\ \Big|\ \exp\Big(\theta \frac{|\nu_{n-1}(A)-\nu_{n-1}(B)|}{2}\Big), F_{n-1}\Bigg)\\
&\le \alpha \Bigg(\exp\Big(\theta \frac{|\nu_{n-1}(A)-\nu_{n-1}(B)|}{2}\Big)\Bigg) + \beta
\end{align*}
with $\alpha=1-\kappa(1-e^{-\theta^2/24})$ and $\beta=3\kappa e^{\frac{\theta}2}$.
Taking $M_k=\exp\Big(\theta \frac{|\nu_k(A)-\nu_k(B)|}{2}\Big)$ and observing that for all $n$ we have $\sigma\big((M_k)_{0\le k\le n}\big)\subseteq F_{n}$, we apply Lemma~\ref{lem : martingale} together with \eqref{eq: assumpt} to get
\begin{align*}
\E\Bigg(\exp\Big(\theta \frac{|\nu_n(A)-\nu_n(B)|}{2}\Big)\Bigg)\!<(1-\alpha^n)\frac{3\kappa e^{\frac{\theta}2} }{\kappa(1-e^{-\theta^2/24})}+\alpha^n\E\Big(\exp\Big(\theta \frac{|\nu_s(A)-\nu_s(B)|}{2}\Big)\Big)\!
<\!\frac{150}{\theta^2}.
\end{align*}
\end{proof}

 We also make the following observation.

\begin{observation}\label{obs: avg multiple}
Let $A^1,\dots, A^k$ be a collection of random variables such that for all $i\in[k]$ we have
$\E(\exp(cA^i))<C$ for some constants $c,C$.
Then for any non-negative $a_1,\dots,a_k$ such that $\sum_{i=1}^k a_i \leq 1$, we have
\[\E\left(\exp\Big({c\sum_{i=1}^k a_i A^i}\Big)\right)<C.\]
\end{observation}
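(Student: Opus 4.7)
The plan is to apply Jensen's inequality, reducing to the case of a genuine convex combination. Since the exponential is convex, for any non-negative weights $(w_j)$ summing to one one has the pointwise bound $\exp\bigl(c \sum_j w_j A^j\bigr) \le \sum_j w_j \exp(cA^j)$, and taking expectations together with linearity yields the uniform bound $C$ immediately.

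To handle the regime $\sum_{i=1}^k a_i < 1$, I would augment the collection with a deterministic $(k{+}1)$-th term, setting $A^{k+1} \equiv 0$ and $a_{k+1} := 1 - \sum_{i=1}^k a_i \ge 0$. Then $c\sum_{i=1}^{k+1} a_i A^i = c\sum_{i=1}^k a_i A^i$ and the augmented weights sum to exactly one, so Jensen applies. The hypothesis $\E[\exp(cA^i)] < C$ persists for the new index since $\E[\exp(c A^{k+1})] = 1$, and in every application of the observation in this paper we have $C \ge 1$ (in Lemma~\ref{lem: balancing}, for instance, $C = 150/\theta^2$).

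Combining the two ingredients,
$$\E\Bigl[\exp\Bigl(c\sum_{i=1}^k a_i A^i\Bigr)\Bigr] = \E\Bigl[\exp\Bigl(c\sum_{i=1}^{k+1} a_i A^i\Bigr)\Bigr] \le \sum_{i=1}^{k+1} a_i\, \E[\exp(cA^i)] < \sum_{i=1}^{k+1} a_i\, C = C,$$
which is the conclusion. There is no real obstacle here: the only step meriting a moment's thought is the padding trick used to reduce $\sum a_i \le 1$ to $\sum a_i = 1$ before invoking convexity.
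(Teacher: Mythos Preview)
Your proof is correct and follows essentially the same approach as the paper's own proof, which simply reads ``This is an immediate consequence of Jensen's inequality and the convexity of the exponential function.'' Your padding trick to reduce $\sum a_i \le 1$ to $\sum a_i = 1$ makes explicit what the paper leaves implicit, and your remark that $C \ge 1$ is needed for the augmented term is a genuine observation the paper glosses over (indeed, the statement as written fails for $C<1$ when all $a_i=0$).
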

\begin{proof}
This is an immediate consequence of Jensen's inequality and the convexity of the exponential function.
\end{proof}

Finally, we require the following estimate.

\begin{observation}\label{obs:too dense}
Let $Z$ be an process on $[0,1)^d$, associated with counting measure $\nu$ and conditional density $\lambda$ with $\lambda_n(x)<2$ for all $n,x$.
Let $\D \subseteq [0,1)^d$ be a measurable set.  Then, for any $s\in \N, 0< \alpha<1 $, we have
\[
\E(e^{\alpha(\nu_s(\D)-\nu_0(\D))})\le e^{4s|\mathcal D|\alpha}
\]
\end{observation}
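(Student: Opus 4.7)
The plan is to bound the moment generating function increment-by-increment using the pointwise bound on the conditional density, and then iterate via the tower property, which is a standard conditional Chernoff-type argument.

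First, write $\nu_s(\D) - \nu_0(\D) = \sum_{n=1}^{s} Y_n$, where $Y_n := \ind\{Z_n \in \D\}$ is the Bernoulli indicator that the $n$-th sample lands in $\D$. Setting $\mathcal F_{n-1} = \sigma(Z_1, \dots, Z_{n-1})$ and using the definition of the process via the conditional density together with the hypothesis $\lambda_n(x) < 2$, one immediately has
\[
\Pr\bigl(Y_n = 1 \,\big|\, \mathcal F_{n-1}\bigr) \;=\; \lambda_n^{\D}(\nu_{n-1}) \;=\; \int_\D \lambda_n(\nu_{n-1})(x)\,dx \;\le\; 2|\D|.
\]

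Second, recall that for a Bernoulli random variable with parameter $p$, $\E(e^{\alpha Y}) = 1 + p(e^{\alpha} - 1)$. The convexity of $e^\alpha$ on $[0,1]$ gives $e^{\alpha} - 1 \le \alpha(e-1) < 2\alpha$ for $\alpha \in (0,1)$. Combined with $p \le 2|\D|$, this yields
\[
\E\bigl(e^{\alpha Y_n} \,\big|\, \mathcal F_{n-1}\bigr) \;\le\; 1 + 2|\D|\cdot 2\alpha \;\le\; e^{\,4|\D|\alpha}.
\]

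Third, I conclude by iterating the tower property:
\[
\E\Bigl(e^{\alpha \sum_{n=1}^{s} Y_n}\Bigr) \;=\; \E\Bigl(e^{\alpha \sum_{n=1}^{s-1} Y_n}\cdot \E\bigl(e^{\alpha Y_s}\,\big|\,\mathcal F_{s-1}\bigr)\Bigr) \;\le\; e^{\,4|\D|\alpha}\,\E\Bigl(e^{\alpha \sum_{n=1}^{s-1} Y_n}\Bigr),
\]
and induction on $s$ gives the desired bound $e^{4s|\D|\alpha}$.

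There is no real obstacle here; the statement is essentially a domination of the increments in $\D$ by i.i.d.\ Bernoulli$(2|\D|)$ variables, with the only minor care being the choice of constants (the factor $4$ is the product of the pointwise density bound $2$ and the estimate $e^\alpha - 1 \le 2\alpha$ valid on $\alpha \in (0,1)$).
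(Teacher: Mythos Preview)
Your proof is correct and follows essentially the same route as the paper's: stochastic domination of the increments by Bernoulli$(2|\D|)$ variables, then the elementary bound $e^{\alpha}-1\le 2\alpha$ on $(0,1)$ and $1+x\le e^x$. The paper phrases this as a direct comparison with the Binomial$(s,2|\D|)$ moment generating function and separates off the case $|\D|>\tfrac12$ (where $2|\D|$ is not a valid Bernoulli parameter), whereas your tower-property argument handles all $|\D|$ uniformly since $p\le 2|\D|$ and $e^{\alpha}-1>0$ make the bound $1+p(e^{\alpha}-1)\le 1+2|\D|(e^{\alpha}-1)$ hold regardless; this is a minor cosmetic simplification, not a different idea.
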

\begin{proof}
If $|\mathcal D|>\frac12$, the inequality is straightforward. Otherwise, $\E(e^{\alpha(\nu_{s}-\nu_{0})\mathcal D})$ is bounded from above by the moment generating function of Binomial distribution with parameters $s$ and $2|\mathcal D|$, which is $(1+2|\mathcal D|(e^\alpha-1))^s$. Using the fact that $1+x<e^x<1+2x$ for all $x\in [0,1]$, we bound this by $e^{2s|\mathcal D|(e^\alpha-1)}\le e^{4s|\mathcal D|\alpha}$.
\end{proof}

%

\section{Haar Functions}\label{sec: haar}
 A \emph{diadic interval}
 is an interval of the form
 $I=[a2^{-\ell},(a+1)2^{-\ell})$ for $\ell,a \in\Z$. We call $\ell$ the \emph{order of $I$} and write $\cO(I)=\ell$.

 Given a diadic interval $I$ of order $\ell$, we define $I_{\even}$ and $
 I_{\odd}$ as its left and right halves -- in particular, they are the unique diadic intervals satisfying $\cO(I_{\even})=\cO(I_{\odd})=\ell+1$, $\inf I_{\even}=\inf I$
 and $\sup I_{\odd}=\sup I$.
Each diadic interval $I$ satisfying $I_{\even} \subset [0,1)$ is associated with a \emph{Haar function} $H_I:[0,1)\to\{-1,0,1\}$ defined by
\[H_I(x)= \begin{cases}
          1  & x\in I_{\even},  \\
          -1 & x\in I_{\odd},\\
          0  & \text{otherwise,}
          \end{cases}
\]
and we define the order of $H_I$ by $\cO(H_I)=\cO(I_{\even})=\cO(I_{\odd}) =\cO(I)+1$.
It is not hard to verify that Haar functions associated with different diadic intervals are orthogonal with respect to the inner product $\langle f, g \rangle:=\int_{[0,1)^d} f(t) g(t) dt$, and that they forms an orthogonal basis for $L^2([0,1])$. This is known as the \emph{Haar wavelet basis}. Note that the functions here are not normalized so that $\langle H, H \rangle = |\supp(H)| \leq 1$. Also note that the indicator function of any diadic interval $I$ of order $\ell$ is orthogonal to all Haar functions of order greater than $\ell$.

These notion generalize naturally to $d>1$. A \emph{diadic rectangle} $R\subset\R^d$ is the cartesian product of diadic intervals $I_1\times\cdots\times I_d$. The Haar function $H_R:[0,1)^d\to\{\pm1,0\}$ associated with this rectangle is $H_R=\prod_{i=1}^d H_{I_i}$. The orders of these are given by
$\cO(R):=\sum_{i=1}^d\cO(I_i)$ and $\cO(H_R)=\sum_{i=1}^d\cO(H_{I_i})$.
 %

Write $\mathcal H^{h_1}_{h_0}=\big\{H_R\ :\ h_0\le \cO(R)\le h_1 \big\}$ for the set of diadic Haar
functions on $[0,1)^d$ of order between $h_0$ and $h_1$. As before, Haar functions form the orthogonal \emph{Haar wavelet basis} of $L^2([0,1]^d)$.
%
For a Haar function we also define
$$H^+:=\{x\in[0,1)^d\ :\ H(x)=1\}\text{ and }H^-:=\{x\in[0,1)^d\ :\ H(x)=-1\},$$
so that $\left\langle\nu_t,H\right\rangle=\nu_t(H^+)-\nu_t(H^-)$.
\subsection{Writing arbitrary rectangles in terms of Haar functions}
As mentioned in the overview, our strategy maintains balance with respect to all Haar functions up to a certain granularity in order to control the discrepancy on arbitrary rectangles. To this end we first express every \emph{diadic} rectangle as a linear combination of
Haar functions. We then use this construction to use Haar functions to represent every rectangle whose corners are located on a lattice, and later to approximate any arbitrary rectangle.

\begin{propos}\label{props: one rect}
 For any diadic rectangle $R$ in $[0,1)^d$ of order $\ell$ we have
 $$\ind_R=\sum_{H\in \mathcal{H}_0^\ell} \frac{\langle \ind_R, H\rangle}{\langle H, H \rangle} H.$$
Moreover, $\sum_{H\in \mathcal{H}_0^\ell} \left|\frac{\langle \ind_R, H\rangle}{\langle H, H \rangle}\right|=1$.
\end{propos}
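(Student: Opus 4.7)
The plan is to reduce the multidimensional identity to a tensor product of one-dimensional Haar expansions, exploiting that both $\ind_R$ and $H_{R'}$ factor over coordinates. Explicitly, since $\ind_R(x)=\prod_{i=1}^d \ind_{I_i}(x_i)$ and $H_{R'}(x)=\prod_{i=1}^d H_{I'_i}(x_i)$, Fubini yields
\[
\langle \ind_R, H_{R'} \rangle = \prod_{i=1}^d \langle \ind_{I_i}, H_{I'_i}\rangle_{L^2([0,1))}, \qquad
\langle H_{R'}, H_{R'} \rangle = \prod_{i=1}^d \langle H_{I'_i}, H_{I'_i}\rangle_{L^2([0,1))},
\]
so each coefficient $\langle \ind_R, H_{R'}\rangle/\langle H_{R'}, H_{R'}\rangle$ factors coordinatewise. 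Multiplying the one-dimensional identities across coordinates will then produce the whole $d$-dimensional decomposition.

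For the one-dimensional step I would compute $\langle \ind_I, H_{I'}\rangle$ directly from the definition: this integral equals $+|I|$ if $I\subseteq I'_{\even}$, $-|I|$ if $I\subseteq I'_{\odd}$, and $0$ otherwise. Hence it is nonzero exactly on the chain of strict dyadic ancestors of $I$ inside $[0,1)$, and with $\langle H_{I'},H_{I'}\rangle=|I'|$ each nonzero coefficient has magnitude $|I|/|I'|$. Walking up from $I$ (of order $\ell$) to $[0,1)$, these magnitudes form the geometric sum $\tfrac12+\tfrac14+\cdots+2^{-\ell}=1-2^{-\ell}$, and adjoining the constant contribution $|I|=2^{-\ell}$ (arising from the degenerate ``Haar function'' $H_{[0,2)}\equiv 1$, which the $d$-dimensional $\mathcal H^\ell_0$ implicitly carries through coordinates kept constant) brings the total to $1$. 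That the matching signed combination actually equals $\ind_I$ is then the classical orthogonal-basis statement for the Haar system augmented by the constant.

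Tensorizing these one-dimensional identities delivers both claims: the coefficient of $H_{R'}$ in $\ind_R$ is $\pm|R|/|R'|$ whenever each $I'_i$ is a dyadic ancestor of $I_i$, and zero otherwise, while the sum of absolute coefficients factors as $\prod_{i=1}^d 1 = 1$. It remains to verify that the index range $0\le \cO(R')\le \ell$ in the statement causes no loss. The upper bound is free: if $\cO(R')>\ell$ then $\cO(I'_i)>\cO(I_i)$ for some $i$, and then $H_{I'_i}$ is either supported disjointly from $I_i$ or integrates to $0$ over $I_i$, so the coefficient vanishes. The lower bound requires correctly accounting for coordinates carrying the degenerate constant factor so that each composite $R'$ still lies in the stated range. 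This boundary bookkeeping is the main delicate point; it follows mechanically from the dyadic-ancestor characterization of the nonvanishing terms, but it is the step I would need to execute with care.
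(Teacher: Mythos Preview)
Your approach is correct but takes a different, more computational route than the paper. For the first identity the paper argues abstractly: since the Haar system is an orthogonal basis of $L^2([0,1)^d)$, it suffices to check that $\langle \ind_R, H\rangle = 0$ whenever $\cO(H)>\ell$, which follows (as you also observe) because some coordinate factor must then have too high an order. For the second identity the paper gives a slick sign argument rather than computing anything: if $\langle \ind_R, H\rangle\neq 0$ then $R$ lies entirely in $H^+$ or entirely in $H^-$, so for any fixed $x\in R$ every term $\tfrac{\langle \ind_R,H\rangle}{\langle H,H\rangle}H(x)$ is nonnegative and in fact equals $\bigl|\tfrac{\langle \ind_R,H\rangle}{\langle H,H\rangle}\bigr|$; evaluating the first identity at such an $x$ then yields the sum of absolute coefficients as $\ind_R(x)=1$ immediately. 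Your tensorization instead identifies the nonzero coefficients explicitly (the dyadic-ancestor chains in each coordinate) and sums a geometric series; this is more work but buys more information, namely the exact value $\pm|R|/|R'|$ of every coefficient, which the paper never writes down. The bookkeeping you flag about constant-factor coordinates and the index range is indeed routine once the conventions are pinned down, and your handling of the upper bound $\cO(R')\le\ell$ is exactly the same coordinate-order argument the paper uses.
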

\begin{proof}
Since $\mathcal H^{\infty}_{0}$ is an orthogonal basis for $L^2([0,1]^d)$, it would suffice to show that $\langle \ind_R,H'\rangle=0$ for all $H'\in H^{\infty}_{\ell+1}$.

To this end let $H'$ be a Haar function of order greater than $\ell$ and denote $H=\prod_{i=1}^d H_i $ and $R=\otimes_{i=1}^d I_i$. Since $\cO(H)>\cO(R)$  there must exist $j\in [d]$ such that $\cO(H_j)> \cO(I_j)$. As noted before, this implies that
$\langle \ind_{I_j}, H_j\rangle=0$. Since $\ind_R=\prod_{i=1}^d \ind_{I_i}$
and $H=\prod_{i=1}^d H_i$ we obtain that $\langle \ind_R,H\rangle= \prod_{i=1}^d\langle \ind_{I_i},H_i\rangle =0$ as required.

To see the last part, observe that if $\langle \ind_{D_i}, H\rangle\neq0$ then either $D_i\subseteq \supp(H^-)$ or
$D_i\subseteq \supp(H^+)$. Hence  for any point $x\in D_i$ we have $\frac{\langle \ind_{D_i}, H\rangle}{\langle H, H \rangle}H(x) \ge 0$ from which the last part follows.
\end{proof}
\noindent Define a \emph{lattice rectangle} in $[0,1)^d$ of order $\ell \in \N$ to be a rectangle whose corners are on the lattice $2^{-\ell}\Z$. In the next proposition we provide a decomposition of lattice rectangles of order $\ell$ into diadic rectangles. 

\begin{propos}\label{prop: many rects}
Every lattice rectangle of order $\ell\ge 1$ in $[0,1)^d$ can be written as the disjoint union of at most $(2\ell)^d$ disjoint diadic rectangles of order at most $\ell$.
\end{propos}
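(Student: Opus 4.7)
The plan is to first establish the one-dimensional version of the claim and then bootstrap to $d$ dimensions by a product decomposition. A lattice rectangle of order $\ell$ in $[0,1)^d$ factors as $R=I_1\times\cdots\times I_d$, where each $I_i\subseteq[0,1)$ is a one-dimensional lattice interval of order $\ell$. Granting the one-dimensional claim -- that each $I_i$ admits a disjoint decomposition into at most $2\ell$ dyadic intervals of order at most $\ell$ -- the distributive law immediately yields a disjoint decomposition of $R$ into at most $(2\ell)^d$ dyadic rectangles, each of whose sides has dyadic order at most $\ell$.

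For the one-dimensional case, I would consider $J=[a\,2^{-\ell},b\,2^{-\ell})\subseteq[0,1)$ with $0\le a<b\le 2^\ell$ and let $D$ be the unique smallest dyadic interval containing $J$, with $k:=\cO(D)\le\ell$. If $J=D$ we are finished with a single piece of order at most $\ell$. Otherwise $D$ splits into its dyadic halves $L$ and $R$ of order $k+1$, and minimality of $D$ forces $J$ to intersect both $L$ and $R$, so $J\cap L$ is a \emph{right-flush} subinterval of $L$ ending at the midpoint of $D$, while $J\cap R$ is a \emph{left-flush} subinterval of $R$ beginning at the midpoint of $D$. I would then decompose each flush piece greedily, repeatedly stripping off the largest dyadic subinterval sharing the flush endpoint of the remaining piece.

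The key observation powering the $2\ell$ bound is that along each greedy flush decomposition the orders of the stripped pieces are \emph{strictly monotone}: after removing a dyadic piece of order $j$ from a flush interval, the remainder has length strictly less than $2^{-j}$, which forces the next stripped piece to have order strictly greater than $j$. Since every stripped piece lies inside $L$ or $R$ and has endpoints on the $2^{-\ell}$ lattice, its order lies in $\{k+1,k+2,\ldots,\ell\}$; strict monotonicity therefore caps each flush decomposition by $\ell-k\le\ell$ pieces, for a grand total of at most $2\ell$ dyadic intervals of order at most $\ell$ in the decomposition of $J$. The main obstacle is precisely this monotonicity bookkeeping: without it, greedy decomposition would only yield a bound linear in the length of $J$, whereas strict monotonicity converts the greedy algorithm into one that visits each dyadic scale at most once per side of $D$, recovering a bound depending only on $\ell$.
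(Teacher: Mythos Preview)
Your proposal is correct and follows the same overall architecture as the paper: establish the one-dimensional bound of $2\ell$ dyadic intervals, then take products to get $(2\ell)^d$ dyadic rectangles in dimension $d$.

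The one-dimensional arguments differ in flavor. The paper proceeds by induction on $\ell$: given $[a2^{-\ell},b2^{-\ell})$, it peels off at most one dyadic interval of order $\ell$ from each end (when $a$ or $b$ is odd) and observes that the remaining middle interval has order at most $\ell-1$, so the induction hypothesis gives at most $2(\ell-1)$ further pieces, for $2\ell$ in total. Your argument instead splits $J$ at the midpoint of its minimal dyadic container $D$ and runs a top-down greedy decomposition on each flush half, using strict monotonicity of the orders of the stripped pieces. Both arguments produce the same canonical dyadic decomposition; the paper's induction is shorter, while your approach yields the marginally sharper bound $2(\ell-k)$ with $k=\cO(D)$. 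One small point worth making explicit in your write-up: the claim ``the remainder has length strictly less than $2^{-j}$'' relies on the fact that the running left endpoint $m_i$ remains the left endpoint of a dyadic interval containing $[m_i,q)$, so that alignment is never the binding constraint on the greedy choice --- this is true and easy to check inductively, but it is the reason the monotonicity goes through.
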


\begin{proof}
We begin by showing that any interval of order $\ell\ge 1$ in $d=1$ can be written as the disjoint union of at most $2\ell$ disjoint diadic intervals. We prove using induction on $\ell$. For the case $\ell=1$ the statement is straightforward. For a diadic interval $I=[a2^{-\ell},b2^{-\ell})$ with $0\le a< b \le 2^{\ell}$  we write
$$I=[a2^{-\ell},a'2^{-\ell}) \cup [a'2^{-\ell},b'2^{-\ell}) \cup [b'2^{-\ell},b2^{-\ell})$$
where $a'=a+\ind_{a \text{ is odd}}$, $b'=b-\ind_{b \text{ is odd}}$ and we interpret $[c,c)=\emptyset$. Since the middle interval is of order at most $\ell-1$, by our induction assumption, it can be written as a disjoint union of at most $2\ell-2$ diadic intervals.

For general $d$, given $R=\otimes_{j=1}^d I_j$ with $I_j=[a_j 2^{-\ell},b_j 2^{-\ell})$ this allows us to decompose each $I_j$ into disjoint diadic intervals $I_{j1},\dots,I_{jk_j}$ for $k_j\le 2\ell$. Writing $R=\bigcup_{1\le m_j\le k_j} \otimes_{j=1}^d I_{jm_j}$.
\end{proof}

\noindent Finally, we bound the error when approximating any rectangle by a pair of lattice rectangles, one of which is slightly larger and one which is slightly smaller.

\begin{propos}\label{prop: rect-comp}
Let $\ell,d\in\N$. For any rectangle $R$ contained in $[0,1)^d$ there exist
lattice rectangles $R^-,R^+$ of order at most $d\ell$ such that  $R^- \subseteq R \subseteq R^+$
and
$0 \le |R^1\setminus R^2|\le 2d 2^{-\ell}$.
\end{propos}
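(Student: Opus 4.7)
\medskip
\noindent\textbf{Proof plan.} The approach is a coordinate-wise snapping of the endpoints of $R$ to the lattice $2^{-\ell}\Z$, rounding inward to obtain $R^{-}$ and outward to obtain $R^{+}$. Writing $R = \prod_{i=1}^d [a_i, b_i)$, the plan is to set
\[
a_i^{-} = \lceil a_i 2^{\ell}\rceil 2^{-\ell}, \quad b_i^{-} = \lfloor b_i 2^{\ell}\rfloor 2^{-\ell}, \quad a_i^{+} = \lfloor a_i 2^{\ell}\rfloor 2^{-\ell}, \quad b_i^{+} = \lceil b_i 2^{\ell}\rceil 2^{-\ell},
\]
and define $R^{+} = \prod_{i=1}^d [a_i^{+}, b_i^{+})$ and $R^{-} = \prod_{i=1}^d [a_i^{-}, b_i^{-})$, interpreting any factor with $a_i^{-} \ge b_i^{-}$ as making $R^{-}$ empty. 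Since $a_i, b_i \in [0,1]$, all rounded endpoints lie in $\{0, 2^{-\ell}, 2\cdot 2^{-\ell},\dots,1\}\subseteq 2^{-\ell}\Z\cap[0,1]$, so both $R^{\pm}$ are lattice rectangles of order $\ell \le d\ell$. The containments $R^{-}\subseteq R\subseteq R^{+}$ are immediate from the rounding directions.

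\medskip
\noindent The only content is the volume bound, which I would establish by a standard ``sleeve'' argument. For each coordinate $i \in [d]$, let
\[
S_i := \Big\{x \in R^{+} \,:\, x_i \in [a_i^{+}, a_i^{-}) \cup [b_i^{-}, b_i^{+})\Big\}.
\]
A point of $R^{+}\setminus R^{-}$ must, in some coordinate $i$, lie outside $[a_i^{-}, b_i^{-})$ while still lying in $[a_i^{+}, b_i^{+})$, so $R^{+}\setminus R^{-} \subseteq \bigcup_{i=1}^d S_i$. The $i$-th slab $S_i$ has thickness $(a_i^{-} - a_i^{+}) + (b_i^{+} - b_i^{-}) \le 2 \cdot 2^{-\ell}$ in direction $i$ and its $(d-1)$-dimensional cross-section is contained in $[0,1)^{d-1}$, hence has measure at most $1$. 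Therefore $|S_i| \le 2 \cdot 2^{-\ell}$, and summing over $i$ gives $|R^{+}\setminus R^{-}| \le 2d \cdot 2^{-\ell}$, as required.

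\medskip
\noindent\textbf{Main obstacle.} There is no serious obstacle here; the statement is essentially a routine ``staircase'' approximation. The only subtlety is making sure that the rounding does not push $R^{+}$ out of $[0,1)^d$ (handled automatically since $b_i \le 1$ implies $\lceil b_i 2^{\ell}\rceil 2^{-\ell} \le 1$) and that the cross-sectional volumes used in the sleeve bound are indeed at most $1$ (which follows from $R^{+} \subseteq [0,1]^d$). Note also that the bound $d\ell$ on the order in the statement is not tight — the construction above actually yields order $\ell$ — but this weaker bound is what is quoted and sufficient.
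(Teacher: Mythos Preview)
Your proposal is correct and takes essentially the same approach as the paper: both construct $R^{\pm}$ by the identical coordinate-wise inward/outward rounding to $2^{-\ell}\Z$. The only cosmetic difference is in the volume bound, where the paper writes $|R^-|=\prod r_i$ and $|R^+|\le \prod (r_i+2^{1-\ell})$ and leaves the telescoping implicit, whereas you spell out the equivalent slab decomposition $R^+\setminus R^-\subseteq\bigcup_i S_i$; your observation that the construction actually gives order $\ell$ rather than the stated $d\ell$ is also correct.
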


\begin{proof}
Let $R=\otimes_{i=1}^d[x_i,y_i) \subseteq [0,1)^d$,
and write $R^-=\otimes_{i=1}^d[\lceil 2^{\ell} x_i\rceil2^{-\ell},\lfloor2^{\ell} y_i\rfloor2^{-\ell})$ and
$R^+=\otimes_{i=1}^d[\lfloor 2^{\ell} x_i\rfloor2^{-\ell},\lceil2^{\ell} y_i\rceil2^{-\ell})$. Clearly
$R^- \subseteq R \subseteq R^+$.
Writing $r_i= \lfloor  2^{\ell} y_i \rfloor 2^{-\ell}-\lceil 2^{\ell} x_i \rceil2^{-\ell}$ we have $|R^-|=\prod_{i=1}^d r_i$ and $|R^+|\le \prod_{i=1}^d( r_i+2^{1-\ell})$. The Propositions follows.
\end{proof}

\section{The Haar \texorpdfstring{$(1+\beta)$}{1+beta}-thinning strategy}\label{sec:Lowdisc}
In this section we present the \emph{Haar $(1+\beta)$-thinning strategy} which guarantees asymptotically low discrepancy, and show that it satisfies Theorem~\ref{thm:main1}.

Throughout, let $h = h(n) =\lfloor \log  n  \rfloor$. This will serve
as the largest order Haar function being considered by our strategy at time $n$. We denote $W(s)=\sum_{i=1}^s{\binom{i+d-1}{d-1}}$ and let $Z$ be a process on $[0,1)^d$ associated with counting measure $\nu$ and conditional density $\lambda$, defined by
\begin{equation}\label{eq:intensity}
\lambda_n(\nu_n)(x) := 1 + \frac{\beta}{2W(h)} \sum\limits_{H\in \cH_1^h}\sgn\langle  \nu_n,-H \rangle H(x).
\end{equation}

\begin{figure}
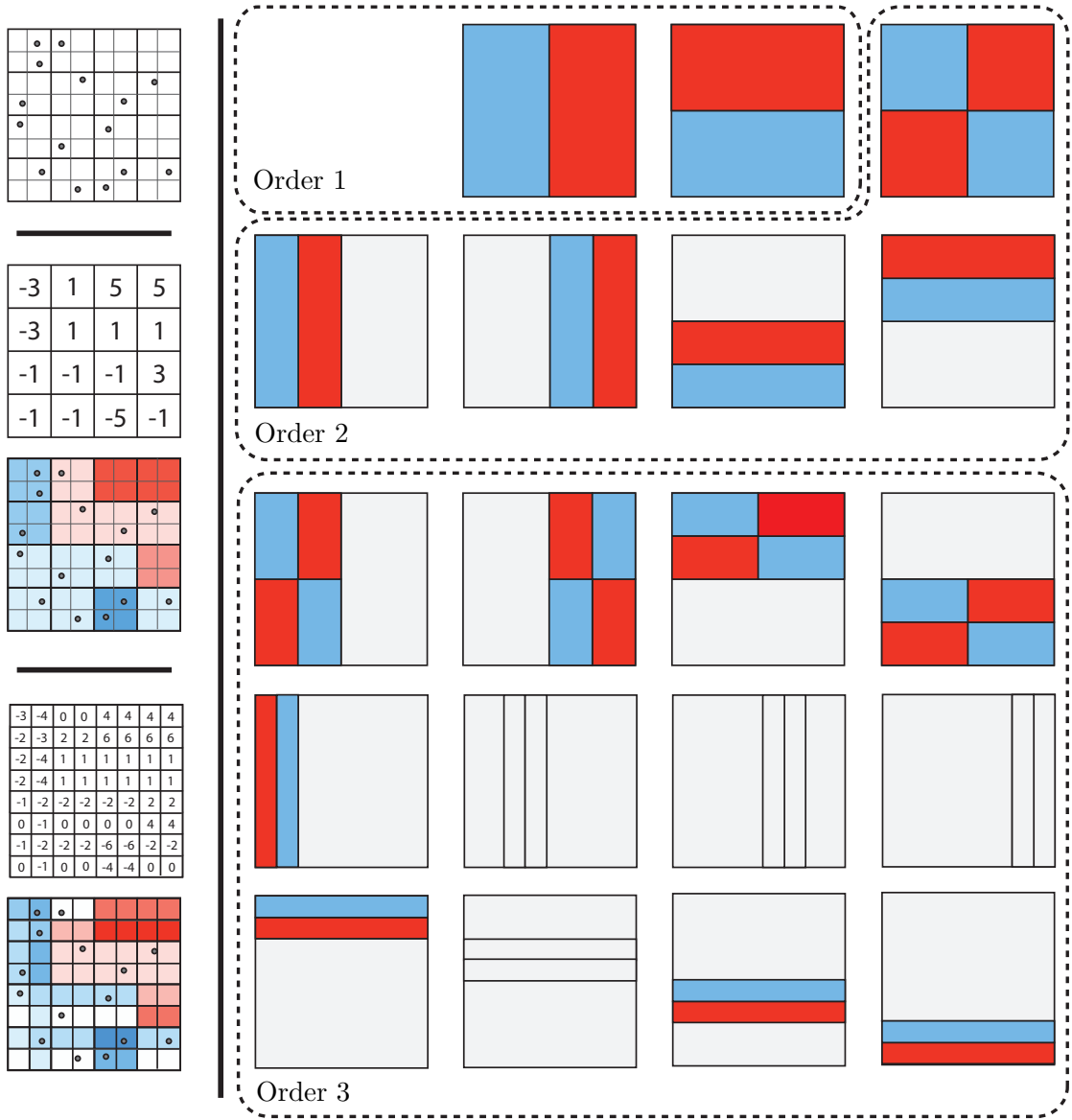

  \centering
  \begin{subfigure}{.94\textwidth}
    \centering
  \colordependent{\includegraphics[width=1.\linewidth]{diag/Haar3.pdf}}{\includegraphics[width=1.\linewidth]{diag/Haar3col.pdf}}
  \end{subfigure}%
  \putat{-325}{150}{Order 1}
  \putat{-328}{50}{Order 2}
  \putat{-331}{-210}{Order 3}
  \caption{{\bf Left, top.} $\nu_{15}$ the empirical measure of a sequence sampled according to our thinning strategy at $n=15$. {\bf Left, middle.} $\sum_{H\in \cH_1^3}\sgn\langle  \nu_n,-H \rangle H(x)$, averaged on diadic squares of side 1/4 along with a visual representation of the conditional density of $\lambda_{15}$  averaged on diadic squares of side 1/4. \colordependent{Darker color indicates higher density}{Warmer color indicates higher density}. {\bf Left, bottom.} The
  conditional density of $\lambda_{15}$ along with a visual representation.
  {\bf Right.} Haar functions of orders one to three, multiplied by $\sgn \langle  \nu_t,- H \rangle$. Gray indicates the value $0$, \colordependent{light gray}{blue} -- the value $-1$ and \colordependent{dark gray}{red} -- the value $1$. Notice that the fully grayed out functions of order three are the ones which are perfectly balanced.}
 \label{fig:intensity example}
\end{figure}

We begin by observing that
\begin{observation} \label{obs: realization}
$Z$ is a $(1+\beta)$-thinned sample of a $(1+\beta)$-thinning strategy.
\end{observation}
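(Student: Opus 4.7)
The plan is to verify that, conditioned on $Z_1,\dots,Z_{n-1}$, the function $g(x):=\lambda_n(\nu_{n-1})(x)$ defined by \eqref{eq:intensity} is a probability density on $[0,1)^d$ satisfying $1-\beta/2 \le g(x) \le 1+\beta/2$. Once that is in hand, Proposition~\ref{prop: choice criterion} produces an explicit $(1+\beta)$-thinning function $f_n(\cdot) = g(\cdot)-\beta/2$ whose $(1+\beta)$-thinned sample has exactly the conditional law of $Z_n$ prescribed by \eqref{eq: cond-int-rate}. The collection $\{f_n\}_{n\in\N}$ is then, by definition, a $(1+\beta)$-thinning strategy whose output sequence has the law of $Z$.

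The unit-integral condition is immediate: every Haar function $H$ satisfies $|H^+| = |H^-|$ (the supports are the two halves of a diadic rectangle), hence $\int_{[0,1)^d} H\,dx = 0$, and so $\int_{[0,1)^d} \lambda_n(\nu_{n-1})(x)\,dx = 1$.

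The main step is the pointwise bound, which reduces to estimating $\sum_{H\in\cH_1^h}|H(x)|$ for a generic $x$ off the diadic lattice. I would argue that, for each one-dimensional order $k_i\ge 1$, there is a \emph{unique} $1$-dimensional diadic interval of order $k_i-1$ containing any given coordinate, and therefore a unique one-dimensional Haar function of order $k_i$ nonzero at that coordinate. Since a $d$-dimensional Haar function $H_R=\prod_{i=1}^d H_{I_i}$ is nonzero at $x$ iff each factor is nonzero, the number of $H\in\cH_1^h$ with $H(x)\ne 0$ is bounded by the number of tuples $(k_1,\dots,k_d)\in\N^d$ with $1\le\sum_i k_i\le h$, which in turn is dominated by $\sum_{i=1}^h\binom{i+d-1}{d-1}=W(h)$. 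Consequently,
\[
\bigl|\lambda_n(\nu_{n-1})(x)-1\bigr| \;\le\; \frac{\beta}{2W(h)}\sum_{H\in\cH_1^h}|H(x)| \;\le\; \frac{\beta}{2},
\]
which yields both the envelope bound and nonnegativity of $\lambda_n$.

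Combining the two facts, the hypotheses of Proposition~\ref{prop: choice criterion} are satisfied almost surely at each step, and the construction of the thinning strategy follows. The only nontrivial ingredient is the combinatorial count of Haar functions nonzero at a point; the rest is straightforward bookkeeping to match the formal definition of a $(1+\beta)$-thinning strategy given in Section~\ref{subs: thing strat}.
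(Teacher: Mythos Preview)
Your approach is exactly the paper's: check that $\lambda_n$ integrates to $1$ (each Haar function has mean zero) and that $|\lambda_n(x)-1|\le\beta/2$ via the bound $\sum_{H\in\cH_1^h}|H(x)|\le W(h)$, then invoke Proposition~\ref{prop: choice criterion}. The paper carries out the same count and in fact obtains the \emph{equality} $\sum_{H\in\cH_1^h}|H(x)|=W(h)$ in \eqref{eq:tilt power estimate}.

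There is one indexing slip in your combinatorial step. By the paper's convention $\cH_1^h=\{H_R:1\le\cO(R)\le h\}$ with $\cO(R)=\sum_i\cO(I_i)$ and each $\cO(I_i)\ge 0$; in your variables $k_i=\cO(H_{I_i})=\cO(I_i)+1$ the correct range is therefore $d+1\le\sum_i k_i\le h+d$, not $1\le\sum_i k_i\le h$. Your stated range gives only $\binom{h}{d}$ tuples, which for $d\ge 2$ is strictly smaller than the actual number $W(h)$ of Haar functions in $\cH_1^h$ nonzero at $x$, so the intermediate inequality ``number of nonzero $H$ $\le$ number of tuples'' is false as written. Parametrizing instead by $s_i=\cO(I_i)\ge 0$ with $0<\sum_i s_i\le h$ (as the paper does) yields $W(h)$ on the nose and repairs the argument without further change.
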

\begin{proof}
Observe that $\int_{[0,1)^2} H(x)dx=0$ for all $H\in \cH_1^h$ and therefore, $\int_{[0,1)^2} \lambda_n(\nu_n)(x)dx=1$. We only need to verify that the condition of Proposition~\ref{prop: choice criterion} is satisfied at every $n\in \N$, i.e. that for all $x\in[0,1]^d$ and $n\in\N$ we have $$1-\frac{\beta}2\le \lambda(x)\le 1+\frac{\beta}2,$$
which follows immediately from \eqref{eq:intensity}, and from the fact that for all $x\in [0,1)$ we have
\begin{equation}\label{eq:tilt power estimate}
\sum_{H\in \mathcal H_1^h}|H(x)|=|\{s \in \N_0^d ,0<\sum_{i=1}^d  s_i\le h\}|=\sum_{i=1}^h\binom{i+d-1}{d-1}=W(h).
\end{equation}
\end{proof}
In light of the claim we call the strategy producing $Z$ the \emph{Haar} $(1+\beta)$-thinning strategy.

Next, in Section~\ref{subs:complex} we discuss the complexity of realizing this strategy. In Section~\ref{subs:concentration} we show exponential concentration properties related to $Z$. Finally in section~\ref{subs:thmpf} we use these to prove Theorem~\ref{thm:main1}.

\subsection{Realizing the Haar thinning strategy} \label{subs:complex}

In this section we discuss the time and memory complexity required for the overseer to realize the Haar thinning strategy.
In particular we show the following.
\begin{propos}\label{prop:complexity}
In order to apply the $(1+\beta)-$Haar thinning strategy the overseer requires $\cO(n \log^{d}n)$ memory and $\cO(n \log^d n)$ computations to produce the first $n$ samples.
\end{propos}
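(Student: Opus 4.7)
The plan is to exhibit an explicit data structure implementing the Haar $(1+\beta)$-thinning strategy and to bound the per-sample cost in both operations and memory.

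The overseer maintains a hash table $T$ mapping each Haar function $H \in \cH_1^{h(n)}$ that has been touched by at least one accepted sample to the running count $\langle \nu_n, H \rangle$; untouched entries are implicitly $0$, so $\sgn\langle \nu_n,-H\rangle$ can be read off from $T$ in $\cO(1)$ time. Per accepted sample, there are two operations. First, \emph{evaluation}: to apply the thinning function $f(x) = \lambda_n(x) - \beta/2$ from Proposition~\ref{prop: choice criterion} at a candidate point $x$, the overseer sums $\sgn\langle \nu_n, -H\rangle H(x)$ over all $H \in \cH_1^{h(n)}$ with $x \in \supp(H)$; by \eqref{eq:tilt power estimate} there are exactly $W(h(n)) = \cO(\log^d n)$ such Haar functions, each contributing $\cO(1)$ work. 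Second, \emph{update}: after accepting $Z_n = x$, she increments $\langle \nu, H \rangle$ by $H(x) \in \{\pm 1\}$ for each of the same $\cO(\log^d n)$ Haar functions. Since the two-thinning requires at most one rejection per accepted sample, the per-sample cost is bounded by $\cO(\log^d n)$.

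The main subtlety is that $h(n)$ grows with $n$: when $n$ crosses a power of $2$ and $h$ jumps from $k-1$ to $k$, Haar functions of order $k$ newly enter the strategy and their counts must be initialized from all prior samples. Each prior sample lies in $\binom{k-1}{d-1} = \cO(\log^{d-1} n)$ new Haar functions of order $k$, so this ``catch-up'' costs $\cO(n \log^{d-1} n)$ at time $n = 2^k$; summing over the $\cO(\log n)$ doublings via the geometric-series identity $\sum_{k=1}^{\log n} 2^k k^{d-1} = \cO(n \log^{d-1} n)$, the total catch-up cost is absorbed by the main $\cO(n \log^d n)$ bound. The rest of the estimates then follow immediately: total work over $n$ samples is $\cO(n \log^d n)$, and since each operation (evaluation, update, or catch-up) adds at most one new hash-table entry, the total memory is also $\cO(n \log^d n)$ words. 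The only real bookkeeping obstacle is the catch-up at doublings; everything else amounts to counting the Haar functions in each sum.
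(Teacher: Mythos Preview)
Your proof is correct and follows essentially the same approach as the paper: maintain running counts $\langle \nu_n, H\rangle$ for all relevant Haar functions, exploit \eqref{eq:tilt power estimate} to bound the per-sample evaluation and update cost by $W(h)=\cO(\log^d n)$, and amortize the catch-up at each doubling via a geometric sum. The only differences are cosmetic: you use a hash table where the paper uses an explicit array indexed by shape and position, and your catch-up estimate is slightly sharper (the paper bounds the cost at doubling $s$ by $\cO(2^s W(s))$ rather than your $\cO(2^s s^{d-1})$, but both sum to $\cO(n\log^d n)$). One minor quibble: the number of order-$k$ Haar functions containing a fixed point is $\binom{k+d-1}{d-1}$, not $\binom{k-1}{d-1}$, though both are $\Theta(k^{d-1})$ so your asymptotics are unaffected.
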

\begin{proof}
Recall that in our set-up the overseer is given a uniformly distributed point $X_{n+\sum_{i=1}^{n-1}\chi_i}$ in $[0,1)^d$. Then, relying upon a data structure which he maintain, the overseer he must compute a threshold $\tau_n \in [0,\beta]$. Then with probability $\tau_n$ the value of $\chi_n$ is set to be 1 and otherwise it is set to be 0. In light of Proposition~\ref{prop: choice criterion}, in order to realize $Z$ we must set $\tau_n=\lambda_n(\nu_n)(x)-\frac{\beta}2$. The rest of the section discusses the complexity of computing this function.

We remark that, as the custom goes, complexity estimates are given for integer computations and ignore the increase in storage, reference and computation costs for large numbers. If these were taken into account additional poly-$\log\log n$ factors would multiply both time and memory.

As before, let $n\in\N$, recall that $h = h(n) =\lfloor \log  n  \rfloor$ and $W(s)=\sum_{i=1}^s{\binom{i+d-1}{d-1}}$, and denote by $H_I$ a Haar function corresponding to the diadic rectangle $I$.
For each function $H_I$ of order $\ell$ with $I=I_1\times\dots\times I_d$ and $\cO(I_i)=\ell_i$ so that $\ell=1+\sum \ell_i$, we call $\ell_1,\dots,\ell_d$ the \emph{shape} of $I$.
At time $n$ we maintain an array $A_n$ of gradually increasing size. $A_n$ consists of data cells corresponding to each Haar function in $\cH_1^h$. Each of these cells associated with $H_I\in \cH_1^h$ contains the present value of $\langle \nu_n, H_I\rangle$.
Observe that the size of such an array is bounded by the total number of shapes which is $W(h)=O(\log^{d} n)$, multiplied by the maximal number of elements of each shape of order $\ell\le h$ which is $2^{\ell+d}= O(n)$, giving total memory complexity of $O(n\log^{d} n)$, as required.

The arrangement of $A$ is as follows. We order the data cells first by their order $\ell$, then lexicographically by shape and then lexicographically by the point whose coordinate sum is minimal in $I$. With this arrangement we can find the cells of all Haar functions of order less than $\ell$ containing a particular point at the cost of a constant number of arithmetic operations per function.

Given this array, computing the value of $\tau_n$ takes $W(h)=O(\log^{d} n)$ operation, one for each element of the sum. Using this we can determine the value of $Z_n$. We then update $A_n$ by altering the value of all entries corresponding to Haar functions associated with rectangles containing $Z_n$. As noted in \eqref{eq:tilt power estimate}, this takes $W(h)=O(\log^{d} n)$ operations.
In addition, for each $n$ such that $h(n)>h(n-1)$ we must allocate additional  entries to $A$ for the new $\binom{h+d-1}{d-1}$ shapes of order $h(n)$. There are less than $2n$ Haar functions for each of these shapes so that this operation takes less than $2n W(h)$ operations.
We then go over all points $Z_1,\dots, Z_n$ and update the entries of $A_n$ corresponding to Haar functions associated with the new shapes at the cost of $O(nW(h))$ steps. Hence to produce the first $n$ entries and the time complexity is
$$ O(n\log^{d} n)+\sum_{s=1}^{\lfloor{\log n}\rfloor}\Big(O(2^{d+s} W(s)) + O(2^sW(s))\Big) = O(n\log^{d} n),$$
concluding the proof of the proposition.
\end{proof}

\subsection{Concentration properties of \texorpdfstring{$Z$}{Z}}\label{subs:concentration}

We begin by showing that diadic projections of $Z$ have a balancing nature. Recall that $h = \lfloor \log n \rfloor$.
\begin{propos} \label{prop: self-correcting}
For any Haar function $H$ on $[0,1)^d$ we have
\begin{equation}\label{eq: msbnb concentration}
\E\left(e^{\frac{\beta\left| \langle \nu_n, H \rangle \right|}{2W(h)}} \right)<\frac{600 W(h)^2}{\beta^2}.
\end{equation}
\end{propos}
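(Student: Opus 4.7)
The plan is to reduce the bound to Lemma~\ref{lem: balancing} applied to the disjoint pair $\{H^+, H^-\}$, exploiting the identity $\langle \nu_n, H\rangle = \nu_n(H^+) - \nu_n(H^-)$. Let $\ell = \cO(H)$, set $\kappa = |H^+| = |H^-| = 2^{d-\ell-1}$, and fix $n$ with the target parameter $\theta = \beta/W(h(n))$. It suffices to establish $\E(\exp(\theta|\nu_n(H^+) - \nu_n(H^-)|/2)) \le 150/\theta^2$, since $150/\theta^2 < 600/\theta^2 = 600 W(h)^2/\beta^2$.

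For the balancing property, I would first note that Observation~\ref{obs: realization} gives $1 - \beta/2 \le \lambda_m(x) \le 1 + \beta/2$, from which $\kappa \le \lambda_m^{H^+} + \lambda_m^{H^-} \le 3\kappa$ follows by integration over $H^+ \cup H^-$. For the difference, the key identity is
\[
\lambda_m^{H^+} - \lambda_m^{H^-} = \int H(x)\,\lambda_m(x)\,dx = \frac{\beta}{2W(h(m))} \sum_{H' \in \cH_1^{h(m)}} \sgn\langle\nu_m, -H'\rangle\,\langle H, H'\rangle,
\]
using $\int H(x)\,dx = 0$ and linearity. By orthogonality of the Haar wavelet basis, $\langle H, H'\rangle = 0$ for $H' \neq H$ while $\langle H, H\rangle = |\supp H| = 2\kappa$, so provided $H \in \cH_1^{h(m)}$ (equivalently $m \ge s := 2^\ell$), only the $H' = H$ term survives, yielding $\lambda_m^{H^+} - \lambda_m^{H^-} = \frac{\beta\kappa}{W(h(m))}\sgn\langle\nu_m, -H\rangle$. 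This has the sign that pushes toward balance and magnitude at least $\theta\kappa$ for all $m \in [s,n]$, since $W$ is non-decreasing. Hence $Z$ is $\theta$-balancing with respect to $\{H^+, H^-\}$ from time $s$; the inductive proof of Lemma~\ref{lem: balancing} only consults the hypothesis on $[s,n]$, so it applies.

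For the initial condition at time $s$, the triangle inequality $|a-b| \le a+b$ together with Observation~\ref{obs:too dense} applied to $\mathcal D = \supp H$ with $|\mathcal D| = 2\kappa$ (the hypothesis $\lambda_m < 2$ follows from $\beta \le 1$) gives
\[
\E\bigl(e^{\theta|\nu_s(H^+) - \nu_s(H^-)|/2}\bigr) \le \E\bigl(e^{\theta \nu_s(\supp H)/2}\bigr) \le e^{4s\kappa\theta} = e^{2^{d+1}\theta},
\]
using the identity $s\kappa = 2^{d-1}$. It remains to verify $\theta^2 e^{2^{d+1}\theta} \le 150$; combining the hockey-stick identity $W(d) = \binom{2d}{d} - 1$ with $\theta \le 1/W(d)$ (which holds because $\ell \ge d$ for any $d$-dimensional Haar function) reduces this to a numerical check, tightest at $d=1$ where it reads $e^4 \approx 55 \le 150$. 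Lemma~\ref{lem: balancing} then closes the case $n \ge s$. For $n < s = 2^\ell$ the balancing never activates, but applying Observation~\ref{obs:too dense} directly at time $n$ yields $e^{4n\kappa\theta} < e^{2^{d+1}\theta}$ via $n\kappa < 2^{d-1}$, so the same numerical bound finishes that case. The main obstacle is the orthogonality computation for the difference $\lambda_m^{H^+} - \lambda_m^{H^-}$ and the bookkeeping needed to keep $\theta$ uniform in $m \in [s,n]$ despite $W(h(m))$ varying; after that, the remaining estimates are routine.
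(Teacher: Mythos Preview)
Your overall strategy is the same as the paper's: compute $\lambda_m^{H^+}-\lambda_m^{H^-}$ via orthogonality to verify the balancing condition, bound the initial moment at time $s=2^\ell$ using Observation~\ref{obs:too dense}, and feed both into Lemma~\ref{lem: balancing}. That part is fine and matches the paper closely.

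However, your claim ``$\ell\ge d$ for any $d$-dimensional Haar function'' is false under the paper's conventions, and this propagates through your numerics. The paper's Haar basis allows each one-dimensional factor $H_{I_i}$ to have order $0$ (that is, $I_i=[0,2)$, so $H_{I_i}\equiv 1$ on $[0,1)$); indeed the count $W(h)=\sum_{i=1}^h\binom{i+d-1}{d-1}$ in \eqref{eq:tilt power estimate} is precisely the number of compositions of $i$ into $d$ \emph{nonnegative} parts. Hence there are Haar functions of every order $\ell\ge 1$, and for $\ell<d$ one has $|\supp H|=2^{k-\ell}$ where $k\le\ell$ is the number of non-constant factors, so $\kappa\le 1/2$ rather than your $\kappa=2^{d-\ell-1}$ (which would exceed $1/2$). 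The paper records this as $|\supp H|\le\min(2^{d-\ell},1)$.

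This matters for your verification of the initial condition. Your chain $\theta\le 1/W(d)$ needs $h(n)\ge d$, which you deduce from $n\ge s=2^\ell$ together with $\ell\ge d$; once $\ell<d$ is allowed, that link breaks, and the bound $\theta^2 e^{2^{d+1}\theta}\le 150$ can fail (try $d=4$, $\ell=1$, $n=2$ with your formula for $\kappa$). The fix is exactly what the paper does: use $s\kappa=2^{k-1}\le 2^{\min(d,\ell)-1}$ together with $W(h)\ge 2^{\min(h,d)-1}$, which yields $4s\kappa\theta\le 4$ uniformly and hence $e^4<150/\theta^2$. With that correction your argument goes through; the case $n<s$ is then handled identically.
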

\begin{proof}
Let $H$ be a Haar function on $[0,1)^d$ of order $\ell\in \N$. We use different arguments for times before and after $2^\ell$.
We begin by showing that $Z$ is $\frac{\beta}{ W(h)}$-balancing with respect to $\{H^+,H^-\}$ starting from time $s=2^\ell$.
We
write $\kappa:=|H^-|=|H^+|$, let $2^\ell \le  M \le n$ and observe that
\begin{align*}\lambda^{H+}_M-\lambda^{H-}_M&=\langle H,\lambda\rangle =\left\langle H,1 + \frac{\beta}{2W(\lfloor \log M \rfloor)} \sum\limits_{G\in \cH_1^{\lfloor \log M \rfloor}}G(x)\sgn\langle  \nu_M,-G \rangle  \right\rangle\\
&= \frac{2\beta\kappa\sgn\langle  \nu_M,-H \rangle }{2W(\lfloor \log M \rfloor)}.
\end{align*}
Hence the conditions of \eqref{eq: balancing property} are satisfied with $\theta=\frac{\beta}{W(h)}$. Next we show \eqref{eq: msbnb concentration}.
Indeed, for any time $M \leq 2^\ell$, we have
\begin{equation}\label{eq:small-t}
\E(e^{\frac{\beta}{2W(h)} |\langle  \nu_{M},H \rangle|}) ~\le~ \E(e^{\frac{\beta}{2W(h)} \langle  \nu_{M},|H| \rangle}) ~\stackrel{(i)}{\le}~ e^{\frac{2\beta}{W(h)}2^{\min(d,h)}}\stackrel{(ii)}{\le} 100
<
  \frac{600 W(h)^2}{\beta^2}.
\end{equation}
Here inequality (i) follows from Observation~\ref{obs:too dense} using $M \leq 2^\ell,|\D|=|\supp(H)| \le  \min(2^{d-\ell},1)$.
To see inequality (ii) we claim that $W(h)\ge 2^{\min(h,d)-1}$ . Indeed, if $h\le d$, then
$$W(h)\ge\binom{h+d-1}{h}  \ge \frac{d^h}{h!}\ge \frac{h^h}{h!}\ge 2^{h-1},$$
while if $h> d$, then
$$W(h)\ge\binom{h+d-1}{d-1}  \ge \frac{h^{d-1}}{(d-1)!}\ge \frac{d^{d-1}}{(d-1)!}\ge 2^{d-1}.$$
From this we also deduce that \eqref{eq: msbnb concentration} holds in the case $n\le 2^\ell$.
By applying
Lemma~\ref{lem: balancing} with $\{H^+,H^-\}$, $s=2^\ell$ and $\theta = \frac{\beta}{W(h)}$ we get that \eqref{eq: msbnb concentration} holds also in the case $n > 2^\ell$, concluding the proof of the proposition.
\end{proof}

Next, we use this to show concentration of $\nu_n$ on low-order lattice rectangles.
\begin{propos} \label{prop: diadic rect cons}
For any $n\in \N$ and any lattice rectangle $R\subset [0,1)^d$ with of order at most $h$ we have
\begin{equation*}
\E\left(e^{\frac{\beta\left| \nu_n(R) - n |R| \right|}{2^{d+1}h^{d}W(h)}} \right)\le\frac{600 W(h)^2}{\beta^2}.
\end{equation*}
\end{propos}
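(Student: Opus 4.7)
The plan is to reduce this lattice-rectangle bound to the single-Haar concentration of Proposition~\ref{prop: self-correcting} by expanding $\ind_R$ in the Haar basis and then applying the convexity bound of Observation~\ref{obs: avg multiple}.

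First, Proposition~\ref{prop: many rects} lets me write $R=\bigcup_{i=1}^{k} D_i$ as a disjoint union of $k \le (2h)^d = 2^{d} h^{d}$ diadic rectangles, each $D_i$ of order at most $h$. For each $D_i$, Proposition~\ref{props: one rect} produces coefficients $\{c_{H,i}\}_{H\in \mathcal{H}_0^h}$ with
\[
\ind_{D_i} = |D_i| + \sum_{H\in \mathcal{H}_1^h} c_{H,i}\, H, \qquad |D_i| + \sum_{H\in \mathcal{H}_1^h} |c_{H,i}| = 1,
\]
after splitting off the order-$0$ (constant) term, whose coefficient is $|D_i|$ since $\langle \ind_{D_i}, 1\rangle = |D_i|$. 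Pairing with $\nu_n$ and using $\langle \nu_n, 1\rangle = n$ gives $\nu_n(D_i) - n|D_i| = \sum_H c_{H,i}\,\langle \nu_n, H\rangle$. Summing over $i$ produces
\[
\nu_n(R) - n|R| = \sum_{H\in \mathcal{H}_1^h} b_H\, \langle \nu_n, H\rangle, \qquad b_H := \sum_{i=1}^k c_{H,i},
\]
and by the triangle inequality $\sum_H |b_H| \le \sum_{i=1}^k \sum_H |c_{H,i}| \le k \le 2^{d}h^{d}$.

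Second, I combine the triangle inequality $\left|\nu_n(R) - n|R|\right| \le \sum_H |b_H|\cdot |\langle \nu_n, H\rangle|$ with Observation~\ref{obs: avg multiple}, applied with weights $a_H := |b_H|/(2^{d} h^{d})$ (so $\sum_H a_H \le 1$), random variables $A^H := |\langle \nu_n, H\rangle|$, and constants $c := \beta/(2W(h))$, $C := 600 W(h)^{2}/\beta^{2}$. The required MGF bound $\E(\exp(c A^H)) < C$ is precisely Proposition~\ref{prop: self-correcting}. Since $c/(2^{d}h^{d}) = \beta/(2^{d+1} h^{d} W(h))$, applying Observation~\ref{obs: avg multiple} and monotonicity of $\exp$ yields
\[
\E\!\left(\exp\!\left(\frac{\beta \left|\nu_n(R) - n|R|\right|}{2^{d+1} h^{d} W(h)}\right)\right) \le \E\!\left(\exp\!\left(\frac{\beta}{2^{d+1} h^{d} W(h)} \sum_H |b_H|\cdot |\langle \nu_n, H\rangle|\right)\right) < \frac{600 W(h)^{2}}{\beta^{2}},
\]
which is the claim.

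I do not expect a substantive obstacle beyond bookkeeping. The key point is that the total $\ell^{1}$ weight $\sum_H |b_H|$ is bounded by $k \le 2^{d} h^{d}$ (rather than by a larger constant times $k$), which is what introduces the factor $2^{d+1} h^{d} W(h)$ in the denominator of the exponent, replacing the $2W(h)$ of Proposition~\ref{prop: self-correcting}. One must also take care to include the order-$0$ term in the Haar expansion separately, so that it exactly cancels the $n|R|$ contribution rather than entering the remainder we are trying to control.
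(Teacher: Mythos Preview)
Your proof is correct and follows essentially the same approach as the paper's: decompose $R$ into at most $(2h)^d$ diadic rectangles via Proposition~\ref{prop: many rects}, expand each in the Haar basis via Proposition~\ref{props: one rect} (splitting off the order-$0$ term), and then combine Proposition~\ref{prop: self-correcting} with Observation~\ref{obs: avg multiple} using the resulting $\ell^1$ bound $\sum_H |b_H|\le 2^d h^d$ on the coefficients. Your bookkeeping of the order-$0$ term and of the weights in Observation~\ref{obs: avg multiple} is in fact cleaner than the paper's presentation.
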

\begin{proof}
Let $R$ be a lattice rectangle of order at most $h$. By Proposition~\ref{prop: many rects} there exist disjoint diadic rectangles $D_1,\dots,D_k$ of order at most $h$ such that $k\le (2h)^d$ and $R=\cup_{i=1}^k D_i$. By Proposition~\ref{props: one rect}, each $D_i$ satisfies
$$\ind_{D_i}=\sum_{H\in \mathcal{H}_0^h} \frac{\langle \ind_{D_i}, H\rangle}{\langle H, H \rangle} H,$$
with $\sum_{H\in \mathcal{H}_0^\ell} \left|\frac{\langle \ind_{D_i}, H\rangle}{\langle H, H \rangle}\right|=1$.
We observe that $\langle \ind_{D_i}, \ind_{[0,1)^d}\rangle=n |D_i|$ where $\ind_{[0,1)^d}$ is the only Haar function of order $0$.
We conclude that there exist coefficients $a_H$ for $H\in \mathcal{H}_1^h$  with $\sum_{H\in \mathcal{H}_1^h} |a_i|\le  (2h)^d$ such that
$$\ind_{R}=\sum_{H\in \mathcal{H}_1^h}a_i H+n|R|.$$

By Proposition~\ref{prop: self-correcting}, together with Observation~\ref{obs: avg multiple},
this implies that
\begin{equation}\label{eq: misuse of obs avg multiple}
\E\left(e^{\frac{\beta\left| \nu_n(R) - n |R| \right|}{2(2h)^dW(h)}} \right)\le\frac{600 W(h)^{2}}{\beta^2},
\end{equation}
as required.
\end{proof}

\subsection{Proof of Theorem~\ref{thm:main1}}\label{subs:thmpf}
Let $n\ge 4$ and observe for $n<2^d$ the theorem is straightforward as $\Disc(Z^n)\le n$ almost surely. As before denote $h=\lfloor \log n\rfloor$. We begin by bounding $W(h)$. We compute
$$W(h)\le  \frac{h(h+d-1)^{d-1}}{(d-1)!}\le \frac{2^{d-1}h^{d}}{(d-1)!}\le 25\left(\frac{h}{2}\right)^d.$$

Next, let $R$ be a lattice rectangle of order at most $h$.
By Proposition~\ref{prop: diadic rect cons} we have
$$
\E\left(e^{\frac{\beta\left| \nu_n(R) - n |R| \right|}{50h^{2d} }} \right)\le\E\left(e^{\frac{\beta\left| \nu_n(R) - n |R| \right|}{2^{d+1}h^d W(h)}} \right)\le\frac{600 W(h)^2}{\beta^2},$$
so that by Markov's inequality for all $\Delta'>0$ we have
$$\Prob\left(\Big| \nu_n(R) - n |R| \Big|\ge  \Delta' \log^{2d}n \right)\le \frac{2^{19}  \log^{2d} n}{2^{2d}\beta^2}e^{-\frac{\Delta'\beta}{50 }}.$$

Observe that there are at most $n^{2d^2}$ lattice rectangle of order at most $d\log n$. Denoting $\cR_i=\{\text{lattice rectangle }R\subset [0,1)^d\ :\ \cO(R)\le i\}$ we have
$$\Prob\left(\sup_{R \in \cR_i} \Big| \nu_n(R) - n |R| \Big|\ge  \Delta' \log^{2d}n\right)\le \frac{2^{19}  \log^{2d} n}{2^{2d}\beta^2}n^{2d^2}e^{-\frac{\Delta'\beta}{50 }}.$$

By proposition~\ref{prop: rect-comp} applied with $\ell =\lfloor\log n\rfloor$, for each rectangle $R$ in $[0,1)^d$ there exist $R^+,R^-\in \cR_{d\log n}$ such that
$R^-\subseteq R\subseteq R^+$ and $|R^+|\le|R^-|+4d/n$.
Hence
$$\nu_n(R^-)-n|R^-|-4d \le\nu_n(R^-) - n |R^+|\le\nu_n(R) - n |R|\le\nu_n(R^+) - n |R^-|\le \nu_n(R^+)-n|R^+|+4d.$$
Hence, for $\Delta'>0$ we get
$$
\Prob\left(\Disc(Z^n) \ge \Delta' \log^{2d}n +4d\right)\le \beta^{-2}\exp\left(\frac{2d^2 \log n}{\log e}+\log(\log n)+ \log(2d)+\frac{19-2d}{\log e}-\frac{\beta\Delta'}{50 }\right).
$$
Plugging in $\Delta' = \beta^{-1}(\Delta+1000+100d^2\log n)$ we obtain
$$\Prob\Big(\Disc(Z^n) \ge \beta^{-1}\log^{2d}(N)(\Delta+1000+100d^2\log n) \Big)\le\beta^{-2}e^{-\frac{\Delta}{50}},$$
as required.
\qed
\subsection{Proof of Claim~\ref{clm: unbiased}}\label{subs:unbiased}

It would suffice to show the $Z$ is unbiased for $\ind_R$ where $R=\otimes_{i=1}^d I_i$ is a diadic rectangle $\ell$. To see this we show that
\begin{equation}\label{eq:bias}
\mathbb{E}\left[\frac1n\sum_{i=1}^n \ind_R(Z_i)\right] = \mathbb{E}\left[\frac1n\sum_{i=1}^n \ind_{R'}(Z_i)\right]
\end{equation}
for any $R'=\otimes_{i=1}^d I'_i$ with $\cO(I'_i)=\cO(I_i)$. This is a consequence of the diadic tree symmetry. To see this, consider the binary representation of $R$ and $R'$ in each dimension and write $D_i$ for the digits in which they disagree in dimension $i$. Let $g:[0,1)^d\to[0,1)^d$ be the measure preserving bijection which maps a point $x$ to a point $g(x)$ whose binary representation in each coordinate $i$ is flipped exactly on $D_i$.
Now couple the sequence $X_i$ and with a sequence $X'_i=g(X_i)$ and apply the same strategy to produce $\{Z_j\}_{j\in\N}$ and $\{Z'_j\}_{j\in\N}$ using the same sequence $U$ used to determine our thinning decisions as in Section~\ref{subs: thing strat}. Observe that in this case $Z'_i=g(Z_i)$ so that for all $n$ we have
$$\frac1n\sum_{i=1}^n \ind_R(Z_i) = \frac1n\sum_{i=1}^n \ind_{R'}(Z'_i),$$
and hence, as $Z'_i\overset{d}{=}Z_i$, \eqref{eq:bias} holds.
\qed

\section{The greedy-Haar strategy}\label{sec:Greedy Haar}

In this section we describe the empirically more efficient variant of our strategy called the \emph{greedy-Haar strategy}. We then provide heuristic justification for Conjectures~\ref{conj:imp1} and~\ref{conj:main1}.

Unlike the case of the Haar strategy, we describe the strategy directly by $$f_n((Z_1,\dots,Z_{n-1}),x)=\begin{cases}
1 & \sum\limits_{H\in \cH_1^h}\sgn\langle  \nu_n,-H \rangle H(x)<0\\
\frac{1}{2} & \sum\limits_{H\in \cH_1^h}\sgn\langle  \nu_n,-H \rangle H(x)=0\\
0 & \sum\limits_{H\in \cH_1^h}\sgn\langle  \nu_n,-H \rangle H(x)>0
\end{cases}$$

\noindent The name greedy-Haar corresponds to the a point of view by which each Haar function $H$ wishes to reduce $\langle  \nu_n,H \rangle$. Hence we compute $\sum_{H\in \cH_1^h}\sgn\langle  \nu_n+\ind_{x},H \rangle -\sum_{H\in \cH_1^h}\sgn\langle  \nu_n,H \rangle $ and if this quantity is positive we keep $x$, if it is negative we reject it, and if it is $0$, we break the tie by a fair coin-toss.

\subsection{Heuristic analysis}

We begin by describing the $\log^{d/2}(n)$ heuristic improvement to Theorem~\ref{thm:main},
giving rise to Conjecture~\ref{conj:imp1}. We then describe an additional heuristic $\log^{d/2}(n)$ improvement stemming from the greedy-Haar strategy which adds up to Conjecture~\ref{conj:main1}.

{\bf Improvement of the analysis (Conjecture~\ref{conj:imp1}).} We conjecture that the usage of Observation~\ref{obs: avg multiple} to obtain \eqref{eq: misuse of obs avg multiple} is not tight. In this transition we decompose
each rectangle $R$ to the sum Haar functions whose coefficients add up to at most $\log^{d}_2 n$. We then bound the rectangle's discrepancy by a triangle inequality using the bound for each individual Haar funciton. However, for a rectangle $R$ and a Haar function $h$ we have $|\langle  \ind_R,h\rangle|/ \langle h, h\rangle \le 1$, so the coefficient of each particular Haar function is at most $1$. Hence, assuming sufficient independence between the coefficients $\langle  \ind_R,h\rangle$ for different Haar functions $h$, we should expect the sum of $\langle \nu, H\rangle$ to produce a discrepancy of $\log^{d/2}_2 n$, and not $\log^{d}_2 n$.

{\bf Better concentration inequalities for the greedy-Haar strategy (Conjecture~\ref{conj:main1}).}
Let $H\in \cH^h_1$ be a particular Haar function and assume that $\langle  \nu_{n},-H \rangle>0$. Denote by $k$ the number of elements of $\cH^h_1$ whose support contains a given point.  Also recall the notation $G^-$ and $G^+$, the positive and negative domains of a Haar function $G$. We examine the probability of that a point falls in $H^+$ compared with the probability that it falls in $H^-$.
Observe that every other Haar function $G\in \cH^h_1\setminus
\{H\}$ is orthogonal to $H$ so that $(\sgn\langle  \nu_n,-G\rangle)\langle H,G\rangle)=0$.
In addition, if we approximate the signs of $\langle  \nu_{n},-G \rangle$ for $G\in \cH^h_1\setminus \{H\}$ by independent random variables , then their total value would have a binomial($\frac12,k$) distribution, so that typically on a region of size $\frac{k^{-1/2}}2 \supp(H)$ they are tied and the sign of $\langle  \nu_{n},-H \rangle$ determines whether to accept or reject.  Hence we expect the process $\langle  \nu_{n},-H \rangle$ to behave roughly like an $\Theta(1/h^{d/2})$ balancing process which would yield an improvement of $\log^{d/2}_2 n$ to the bound.

%

\section{Empirical results}\label{sec:experiments}

In this section we provide simulation results both for the Haar and the greedy-Haar 2-thinning strategies. As evident from these simulations, the greedy-Haar strategy is significantly better than the Haar strategies, and both strategies perform somewhat better than shown by our Theorems.

We begin by showing discrepancy results, and then discuss the bias of particular rectangles. In all simulations we compare the three methods, i.i.d. samples which we refer to here as Monte-Carlo, Haar 2-thinning, and greedy-Haar 2-thinning. Unfortunately the simulations are not sufficient to determine the power of the log in the decay of the discrepancy with sufficient certainty to scientifically estimate the exponent of the log in Conjecture~\ref{conj:main1}.

\subsection{Main Simulations}

We have averaged $20$ simulated outputs of $2^{19}$ samples for each of the three strategies in one dimension. For this case, we have computed the rectangle $R$ which has maximal $|\nu_N(R)-|R||$ whenever $N=\lceil 2^k\rceil$.
Our results are summarized in Table~\ref{tab:worst} and Figure~\ref{fig:worst}.

\begin{table}
 \centering
 \resizebox{\columnwidth}{!}{
  \begin{tabular}{lllllllll}
    \hline
    \toprule
    {\bf Strategy}        & $n= 56$   & $n=2^7$     & $n=2^9$       & $n=2^{11}$      & $n=2^{13}$       & $n=2^{15}$     & $n=2^{17}$     & $n=2^{19}$     \\
    \midrule
    {\bf Monte Carlo}     & 8.6 (2.5)  & 14.3 (3.3)    & 25.3 (6.1)      & 55.8 (3.5)    & 108.6 (18.4)   & 247.1 (43.9) & 415.8 (38.4) & 835.3 (255.2)\\
    {\bf Haar}      & 8.1 (2.6)  & 11.7 (2.2)  & 28.3 (4.9)    & 43.5 (10.2)     & 97.3  (25.4)   & 128.8 (40.9) & 251.4 (59.1) & 399.8 (134.7)\\
    {\bf greedy-Haar} & 5.9 (0.7)  & 7.8  (1.4)  & 13.0 (8.4)    & 20.3 (5.5)      & 28.0  (2.9)    & 37.0  (2.9)  & 51.9  (1.9)  & 67.5 (4.0)\\
    \bottomrule
    \hline
  \end{tabular}
  }
  \caption{Some Values of Discrepancy for different strategies in one-dimension.
  Given are the mean (and standard error) across 20 experiments.}
  \label{tab:worst}
\end{table}

\begin{figure}
  \centering
  \begin{subfigure}{.49\textwidth}
    \centering
    \includegraphics[width=1.\linewidth]{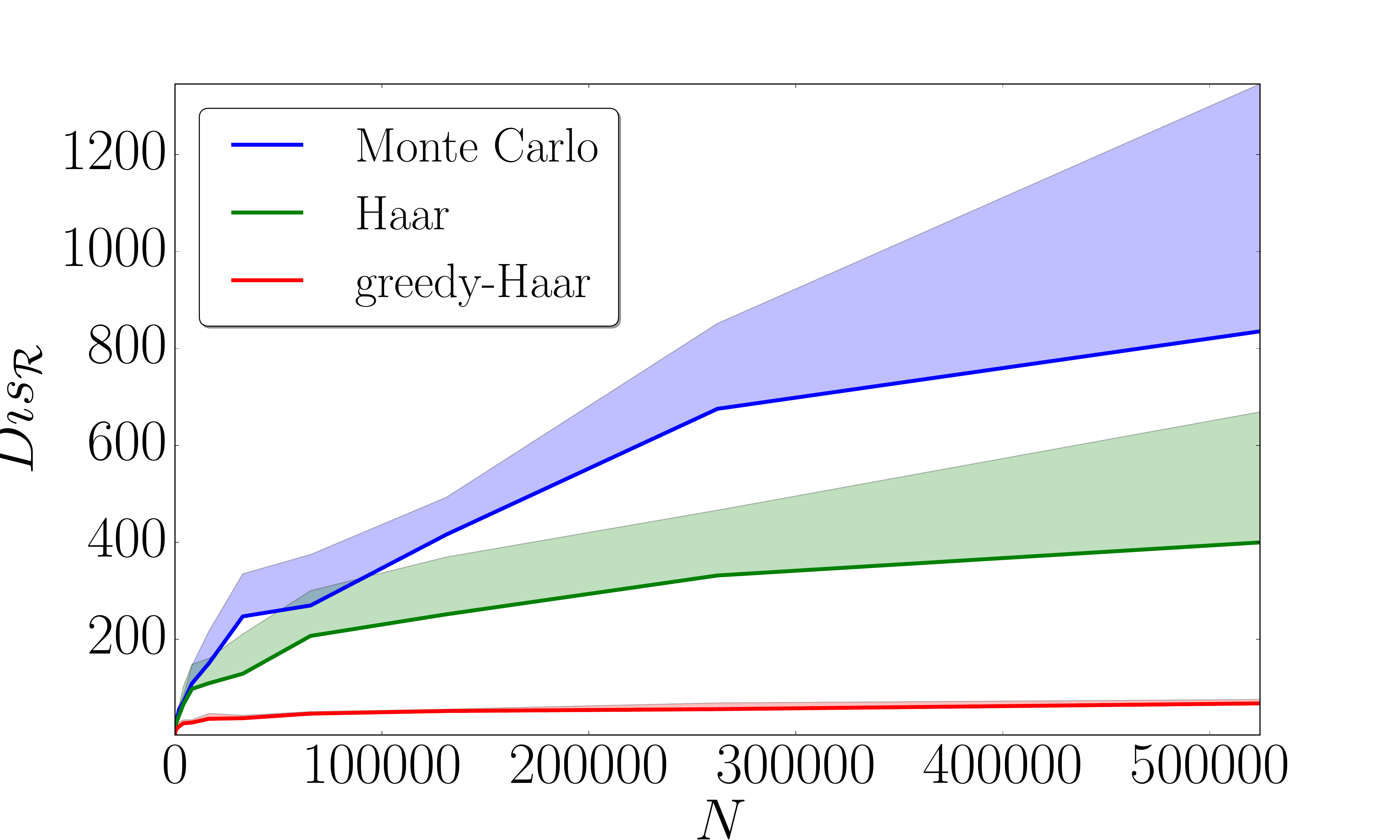}
    \caption{$d=1$, Linear Scale}
  \end{subfigure}%
    \begin{subfigure}{.49\linewidth}
    \centering
    \includegraphics[width=1.\linewidth]{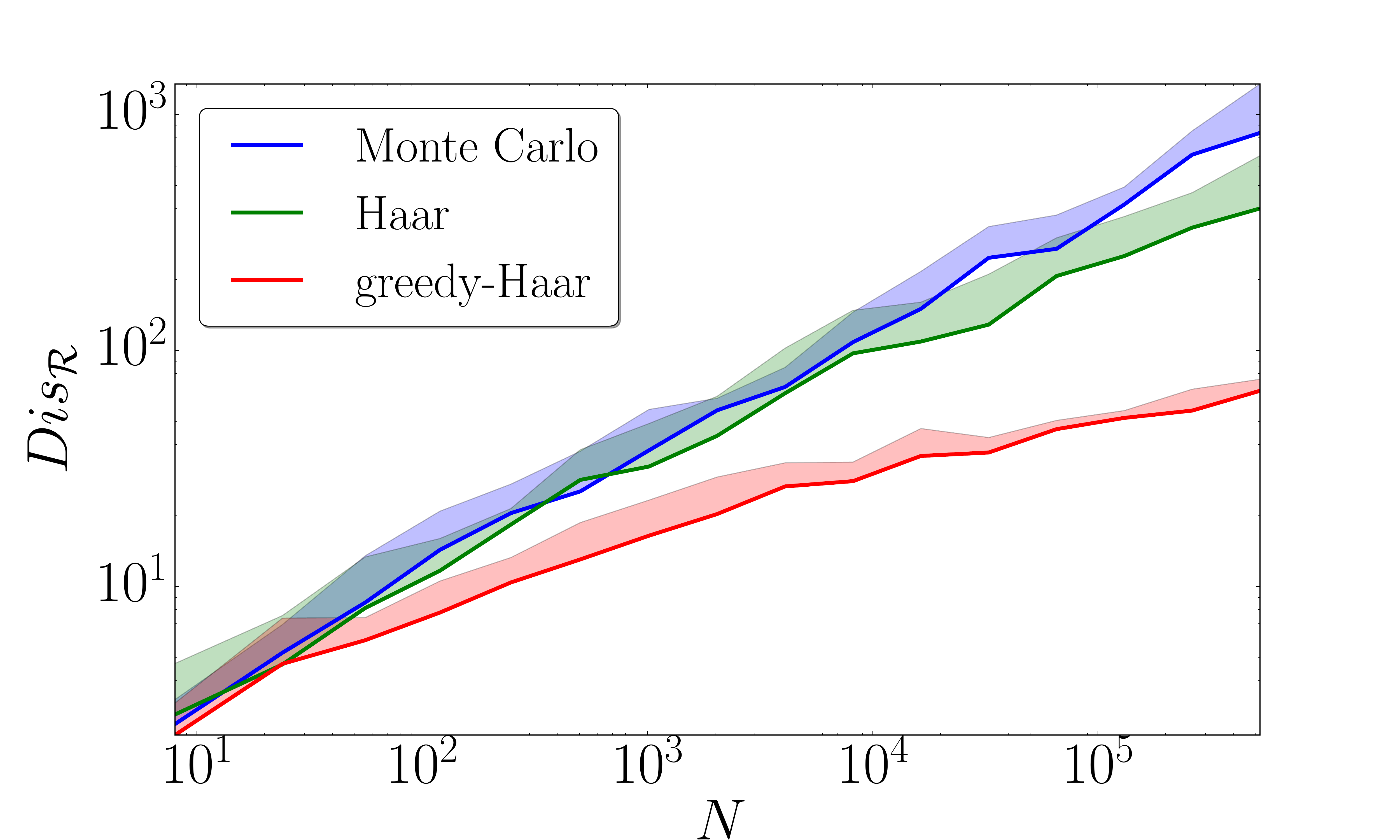}
    \caption{$d=1$, Log Scale}
  \end{subfigure}\vspace{5mm}

  \caption{Plots of discrepancy (averaged over $20$ experiments) in one dimensions for Monte Carlo (blue), Haar 2-Thinning (green) and Greedy-Thinning 2-Thinning (red) strategies. The plots are provided both in linear and log-scale.
}
 \label{fig:worst}

\end{figure}

\subsection{Other Simulations}
We were also interested in the performance of the strategies on a diadic rectangles and on a given rectangle whose decomposition intro Haar-functions has high coefficients. These show the validity of our estimates for such rectangles, and verify the logic of the proof. For this purpose we chose the intervals $[0,\frac12]^d$ and
$[\frac13,\frac56]^d$, the first of which is diadic while the other has a very complex diadic decomposition. Comparison between those rectangles in one and two dimensions are given in Table~\ref{tab:dis_values_cases} and Figure~\ref{fig:discrepancy_cases}. The results clearly indicate the the biases of these rectangles are dominated by a different power of $\log(n)$.

\begin{table*}\centering
\ra{1.2}
\scalebox{0.95}{
\begin{tabular}{@{}rlllclll@{}}\toprule
& \multicolumn{3}{c}{$d = 1$} & \phantom{}& \multicolumn{3}{c}{$d = 2$} \\
\cmidrule{2-4} \cmidrule{6-8}
& \footnotesize{\text{Monte Carlo}} & \hspace{9pt}\footnotesize{\text{Haar}} & \footnotesize{\text{greedy-Haar}} && \footnotesize{\text{Monte Carlo}} & \hspace{9pt}\footnotesize{\text{Haar}} & \footnotesize{\text{greedy-Haar}}\\ \midrule
$R=[0, 1/2)^d$\\
$n=10$ 		& 1.0 (1.0) 	& 1.2 (0.9) 	& 1.1 (0.7) 	&& 1.1 (0.6) 	& 1.6 (1.2)		& 0.8 (0.5)	 \\
$n=100$ 	& 4.9 (3.4) 	& 5.3 (3.0) 	& 1.1 (1.5) 	&& 3.2 (2.2)	& 3.8 (2.7)		& 1.9 (1.7)  \\
$n=1000$ 	& 10.1 (7.0)	& 7.9 (6.2) 	& 2.0 (2.2) 	&& 8.2 (6.1)	& 7.5 (4.3) 	& 4.7 (3.4)  \\
$n=10000$	& 42.5 (36.2)	& 22.9 (18.3) 	& 2.5 (2.7) 	&& 22.4 (17.9)	& 27.1 (17.9)	& 5.1 (4.5) \\
$n=100000$ 	& 102.6 (57.3)	& 28.2 (26.7)	& 3 (2.2)	&& 76.2 (54.8)	& 73.1 (75.0)	& 6.7 (4.9) \\
$R=[1/3, 5/6)^d$\\
$n=10$ 		& 1.2 (0.8)	 & 1.4 (0.9)	 & 0.9 (0.6)	&& 1.1 (0.7)	& 1.1 (.06) 	& 1.2 (0.7)  \\
$n=100$ 	& 3.4 (2.2)	 & 3.1 (2.6)	 & 2.8 (1.7) 	&& 3.1 (2.2)	& 4.1	(3.0)	& 3.0 (1.7)  \\
$n=1000$	& 10.5 (8.5)	 & 9.1 (10.1)	 & 5.4 (4.0)	&& 10.2 (6.7)	& 12.9 (7.4)	& 5.0 (3.1)  \\
$n=10000$	& 36.5 (25.0)	 & 22.9 (17.8)	 & 6.5 (5.7)	&& 36.1 (22.3)	& 40.6 (24.8)	& 13.8 (12.5) \\
$n=100000$ 	& 123.9 (118.1)	 & 57.3 (37.7)	 & 10.4 (6.2)	&& 102.3 (87.4)	& 131.4 (88.7)	& 30.6 (22.1) \\
\\
\bottomrule
\end{tabular}}
\caption{Mean (standard error) biases for different rectangles in one and two dimensions.}
\label{tab:dis_values_cases}
\end{table*}
\begin{figure}
  \centering
    \begin{subfigure}{.45\linewidth}
    \centering
    \includegraphics[width=1.\linewidth]{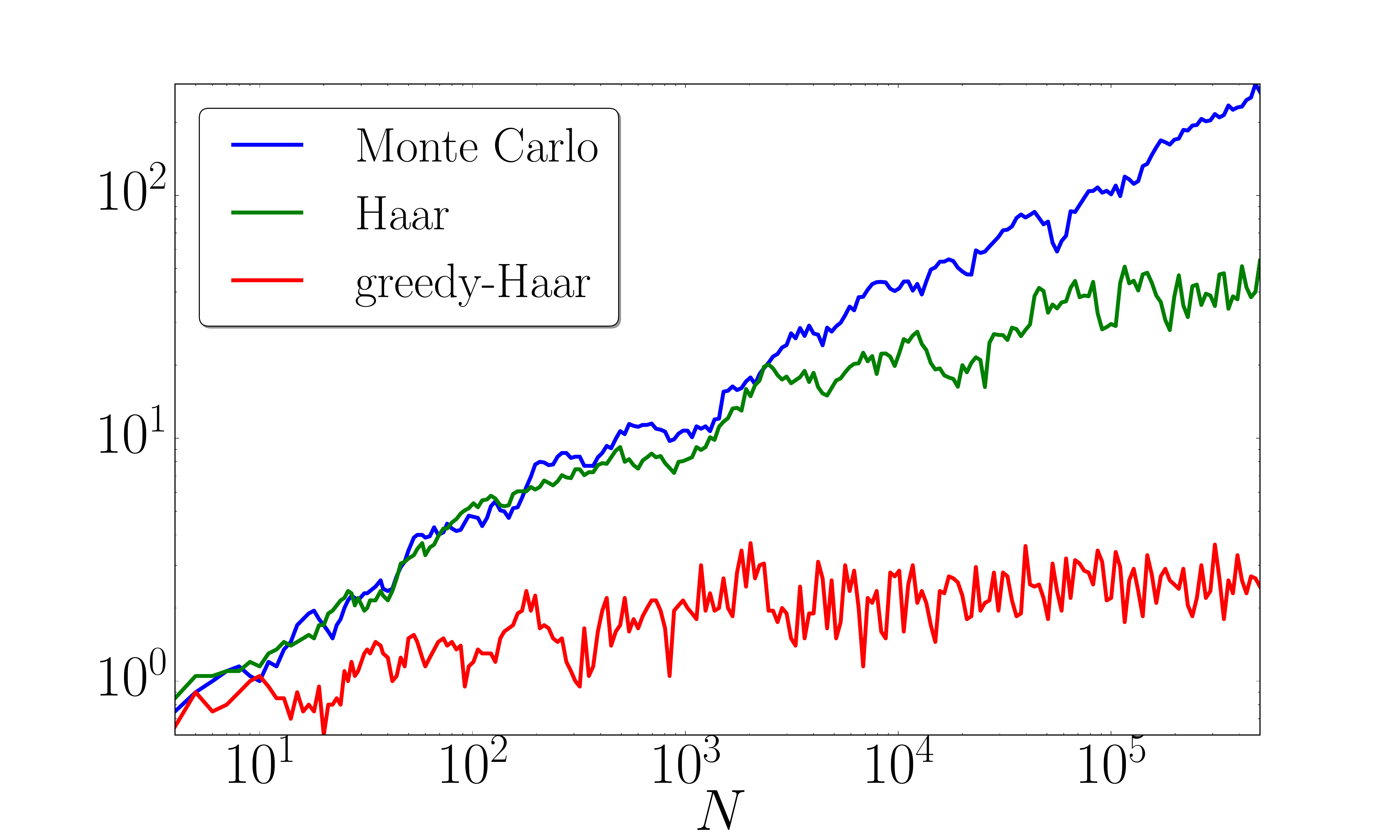}
    \caption{$R=[0, 1/2)$}
  \end{subfigure}
    \begin{subfigure}{.45\linewidth}
    \centering
    \includegraphics[width=1.\linewidth]{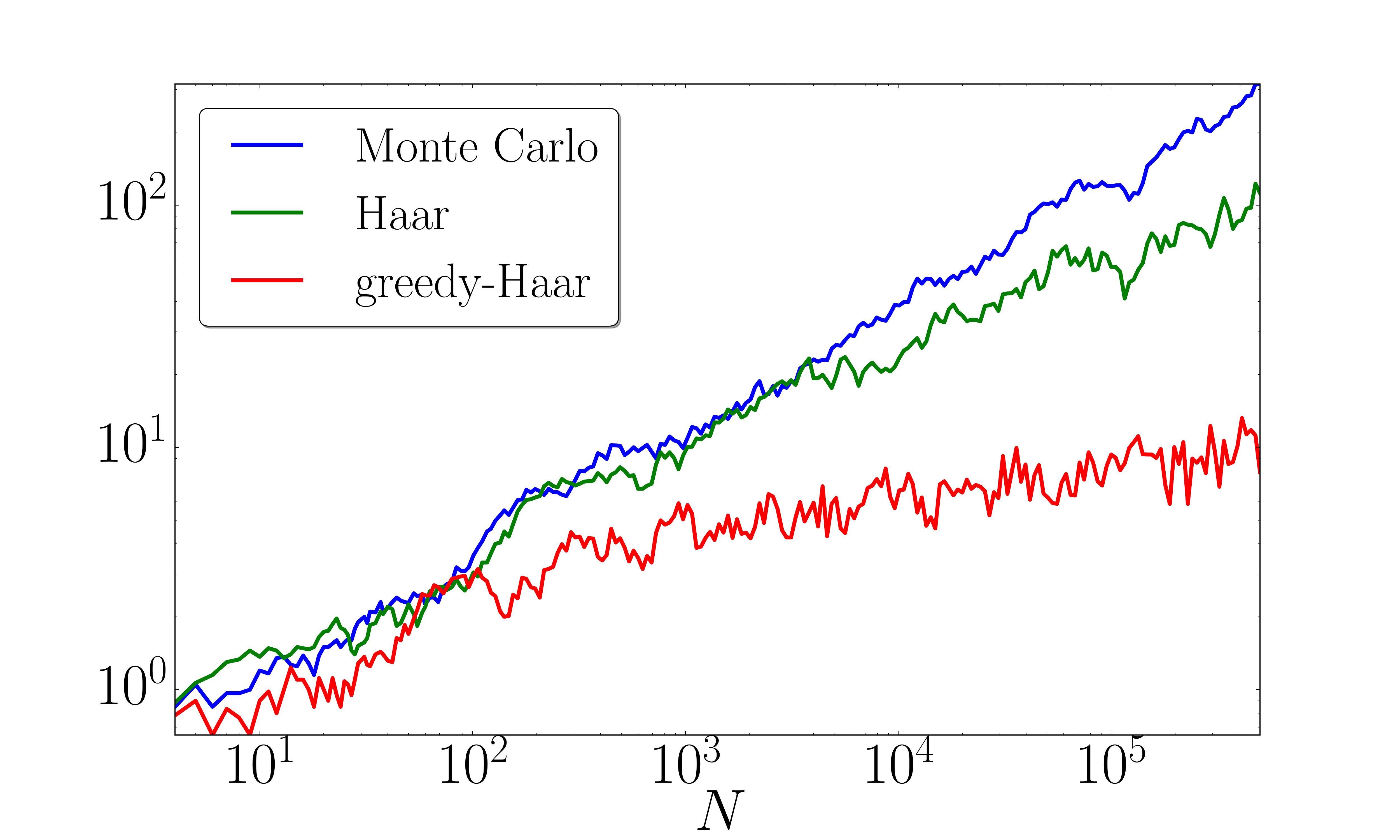}
    \caption{$R=[1/3, 5/6)$}
  \end{subfigure}

    \begin{subfigure}{.45\linewidth}
    \centering
    \includegraphics[width=1.\linewidth]{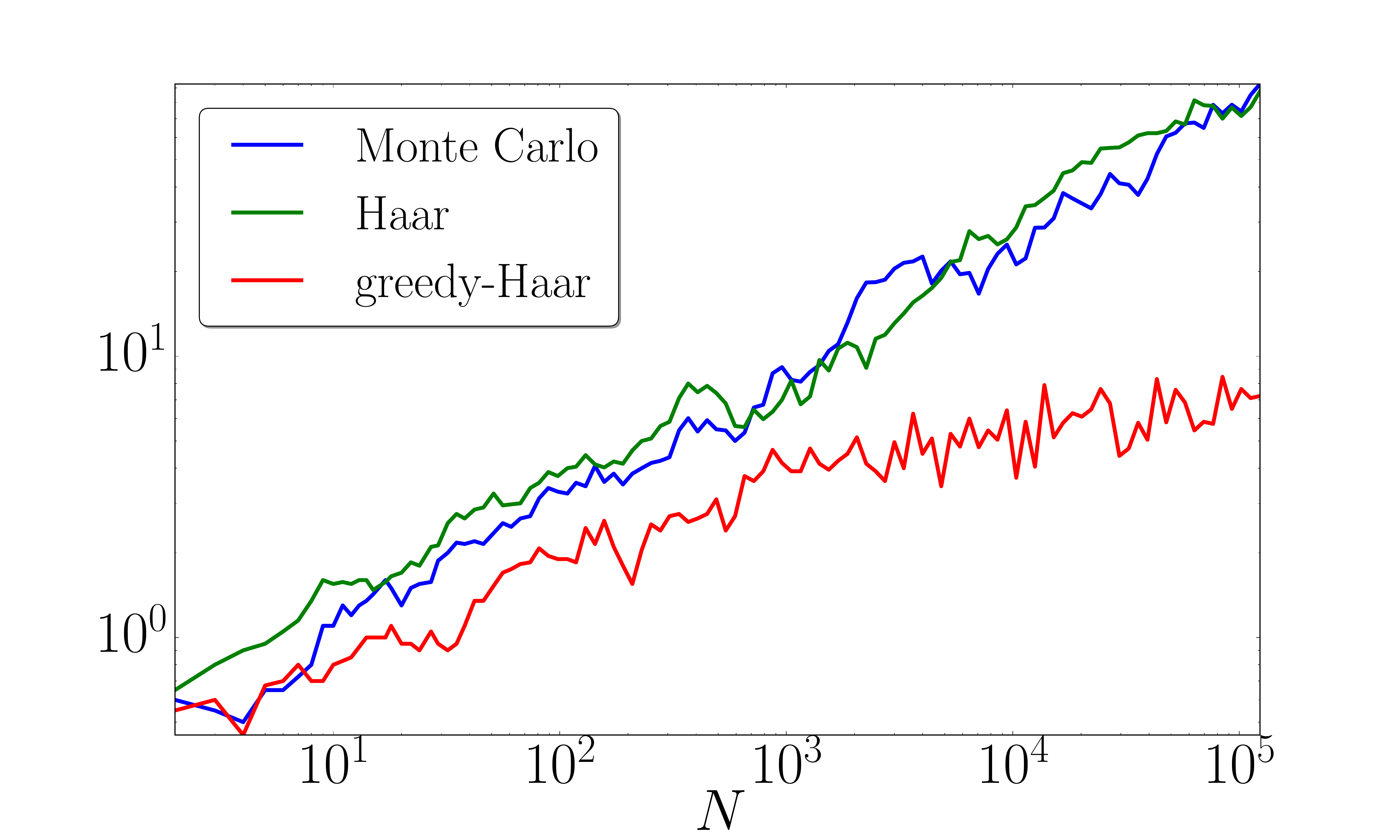}
    \caption{$R=[0, 1/2)^2$}
  \end{subfigure}\vspace{3mm}
    \begin{subfigure}{.45\linewidth}
    \centering
    \includegraphics[width=1.\linewidth]{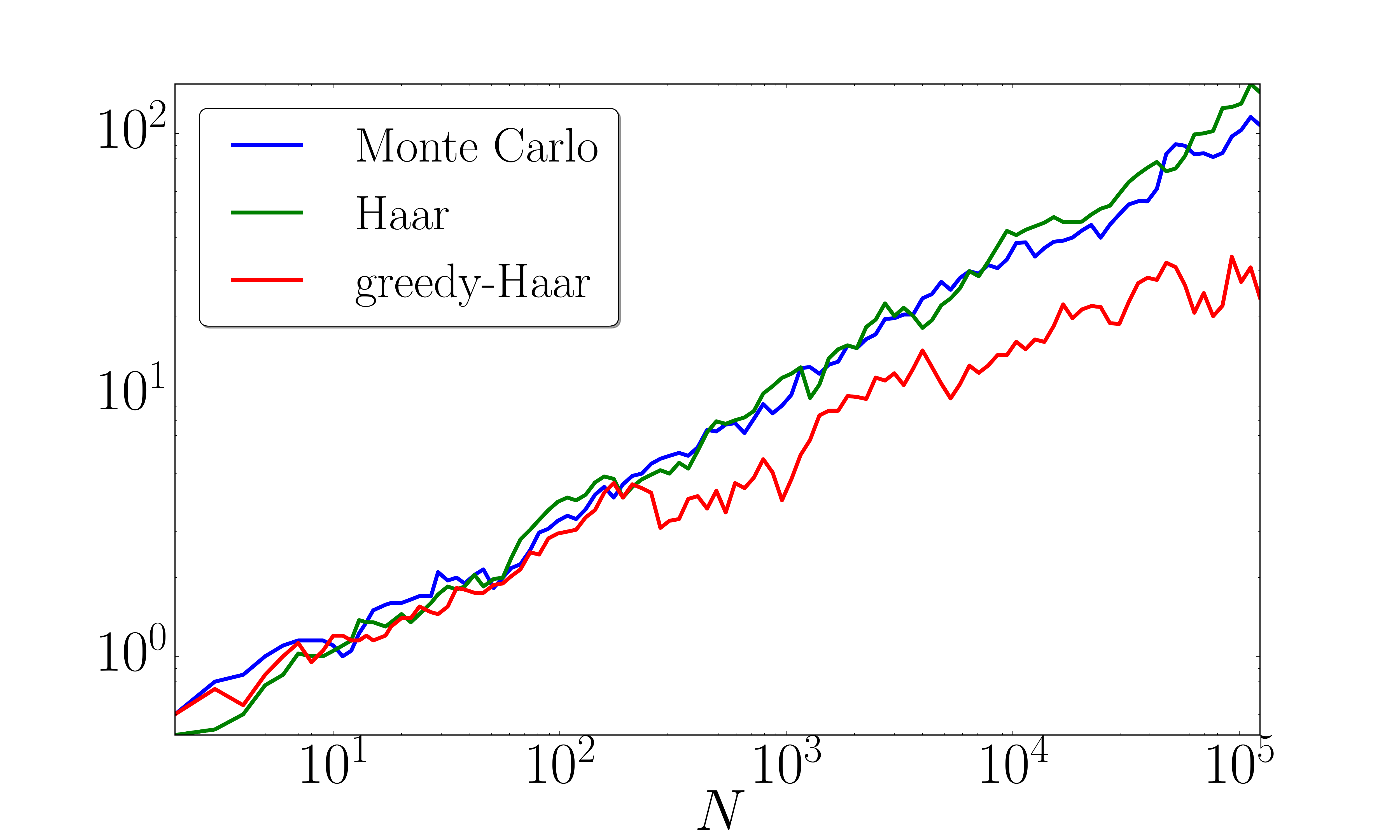}
    \caption{$R=[1/3, 5/6)^2$}
  \end{subfigure}\vspace{3mm}

  \caption{Plots of the bias $|\nu(R)-|R||$ in logarithmic scale averaged over $20$ experiments for different rectangles $R$ in one and two dimensions.
 }
 \label{fig:discrepancy_cases}

\end{figure}
\subsection{Observations from the simulations}
We draw the following observations from the simulations
\begin{itemize}
\item Both Haar and greedy-Haar seem to be always at least as good as Monte-Carlo sampling.
\item Greedy-Haar strategy seem to be always at least as good the Haar strategy.
\item In one dimension greedy-Haar performs significantly better than Monte-Carlo sampling for as little as 50 samples.
\end{itemize}

\section*{Acknowledgments}
The authors wish to thank Itai Benjamini for suggesting the model of the power of two choices
on interval partitions, to Yuval Peres for introducing to us related recent litrature, and to Art Owen and Asaf Nachmias for useful discussions.


\end{document}